\newtheorem{theorem}{Theorem}[section]
\newtheorem{lem}[theorem]{Lemma}
\newcommand{\1}{\partial}
\newcommand{\Z}{{\mathbb Z}}
\begin{document}
\title{Nonnegative solutions of the heat equation in a cylindrical domain and Widder's theorem}

\author{Kin Ming Hui$^*$\\
Institute of Mathematics\\ Academia Sinica\\
Taiwan, R.O.C.
\\kmhui@gate.sinica.edu.tw\\
and\\
Kai-Seng Chou\\ Institute of Mathematical Sciences
\\The Chinese University of Hong Kong\\ Hong Kong\\kschou@math.cuhk.edu.hk}

\date{Sept 6, 2023}
\maketitle
\footnote{$^*$ Corresponding author}

\begin{abstract}
It is shown every nonnegative solution of the heat equation in a bounded cylindrical domain has an integral representation in terms of a trace triple consisting of a bottom trace, a corner trace and a lateral trace on its parabolic boundary.  Conversely this trace triple uniquely determines the solution.
\end{abstract}

{\bf Keywords:}  heat equation, integral representation formula, initial trace, lateral trace, trace triple

{\bf AMS 2020 Mathematics Subject Classification:} Primary 35C99, 35K05 Secondary 35K15


%

\newpage

\setcounter{equation}{0}
\setcounter{section}{0}

\section{Introduction}
\setcounter{equation}{0}
\setcounter{theorem}{0}

The study of the initial and lateral traces of nonnegative solutions of the heat equation was initiated by D.V.~Widder.  In a series of papers starting with \cite{HW}, \cite{W1}, \cite{W2}, D.V.~Widder,  P.~Hartman and A.~Winter solved the problem in the one dimensional case for solutions of the heat equation in an infinite rod, a half infinite rod and a finite rod. A complete treatment can be found in the book \cite{W3}.  In the case of an infinite rod, its $n$-dimensional version becomes the existence of the initial trace for nonnegative solutions of the heat equation in $\mathbb{R}^n$.  This problem was solved and generalized to nonnegative weak solutions of second order uniformly parabolic equations in divergence form by D.G.~Aronson in [A].

   For the case of a finite rod, in Theorem 6 of Chapter VIII of [W2] Widder showed that every nonnegative solution of the heat equation in the domain $(0,\pi)\times (0,T)$ has an integral representation in terms of three trace measures on the parabolic boundary of the domain. These traces consist of nonnegative measures $\alpha$ on $(0,\pi)\times\{0\}$ and $\beta$, $\gamma$,  on $\{0\}\times [0,T)$ and $\{\pi\}\times [0,T)$ respectively.  It was pointed out in \cite{W3} that $\alpha((\varepsilon, \pi/2)\times\{0\})$ and $\alpha((\pi/2,\pi-\varepsilon)\times\{0\})$ could be $\infty$ as $\varepsilon\to 0$ but on the other hand both $\beta(\{0\}\times (0,T_1))$ and $\gamma(\{\pi\}\times (0,T_1))$ are finite for any $0<T_1<T$.  The higher dimensional extension of a finite rod is a bounded cylindrical domain in $\mathbb{R}^n, n\geq 2$. For a non-negative solution in such domain with zero boundary data, the existence of an initial trace consisting of a bottom and a corner ones was established K.M.~Hui in  \cite{H}. 

Recently there is a lot of study of initial traces of non-negative solutions of various parabolic equations. For example similar initial trace problem for the positive solutions of the semilinear heat equation 
\begin{equation*}
u_t=\Delta u-u^q
\end{equation*}
in $C^2$ domain $\Omega\subset\mathbb{R}^n$ with compact boundary where $q>1$ was studied by M.~Marcus and L.~V\'eron \cite{MV4}. The  boundary trace problem for the corresponding elliptic problem was also 
studied by M.~Marcus and L.~V\'eron in \cite{MV1}, \cite{MV2}, \cite{MV3}. Recently K.~Hisa, K.~Ishige and J.~Takahashi \cite{HIT} proved the existence and uniqueness of initial traces of non-negative solutions  to the following semilinear heat equation,
\begin{equation*}
u_t=\Delta u+u^p
\end{equation*}
on a half space of $\mathbb{R}^N$ under the zero Dirichlet boundary condition where $p>1$. 

The initial trace problem for the porous medium equation was studied by Aronson and L.A.~Caffarelli \cite{AC}, K.S.~Chou and Y.C. Kwong \cite{CK1}, \cite{CK2}, B.E.J.~Dahlberg and C.E.~Kenig \cite{DK}, etc. The initial trace problem for the parabolic $p$-laplace equation and the doubly nonlinear parabolic equation were studied by E.~DiBenedetto and M.A.~Herrero \cite{DiH1}, \cite{DiH2}, and K.~Ishige and J.~Kinnunen \cite{Is}, \cite{IsK}, etc.

Note that in \cite{W2} only solutions of one dimensional heat equation is studied and in \cite{A} no measure initial data was considered. On the other hand in this paper we study nonnegative solutions of the heat equation in a bounded cylindrical domain in $\mathbb{R}^n$ for any $n\ge 1$ which may not be zero along their lateral boundary.  It will be shown that such solution has an integral representation in terms of a trace triple consisting of a bottom trace, a corner trace and a lateral trace on its parabolic boundary.  Conversely this trace triple uniquely determines the solution. 

To make things precise, we introduce the following notations and definitions.  Let $Q_T=\Omega\times (0,T)$ where $\Omega$ is a smooth bounded domain in $\mathbb{R}^n$ and $T>0$. For any $x\in\Omega$, let $\delta(x)=\mbox{dist}(x,\partial\Omega)$ be the distance of $x$ from the boundary $\partial\Omega$ of $\Omega$. Let $C_0^\infty(\overline{\Omega})$ be the space of all smooth functions in $\overline{\Omega}$ vanishing on $\partial\Omega$ and
\begin{equation}
L^1(\Omega,\delta)=\left\{f\in L^1_{loc}(\Omega):\int_{\Omega}|f(x)|\delta (x)\,dx<\infty\right\}.
\end{equation}
We denote by

\begin{itemize}

\item $M(\Omega,\delta)$ the collection of all nonnegative Radon measures $\mu$ on $\Omega$ satisfying
$$
\int_\Omega \delta (x)\,d\mu(x)<\infty\ ,
$$

\item $M(\partial \Omega)$ the collection of all nonnegative Radon measures on $\partial \Omega$.  Note that since every Radon measure is finite on compact sets, hence all measures in this collection are finite.

\item $M_s(\partial\Omega\times(0,T))$ the collection of all nonnegative Radon measures $\nu$ on $\partial\Omega\times(0,T))$ satisfying $\nu(\partial\Omega\times(0,T_1))<\infty$ for all $T_1\in (0,T)\ .$

\end{itemize}

Let $u$ be a classical nonnegative solution of the heat equation in $Q_T$.  We say that a pair of measures $(\mu, \lambda)\in M(\Omega, \delta)\times M(\partial\Omega)$ is the \emph{initial trace} of $u$ if for any $\varphi\in C_0^\infty(\overline{\Omega})$,
\begin{equation*}
\lim_{t\to 0^+}\int_\Omega \varphi(x) u(x,t)\,dx =\int_\Omega \varphi \,d\mu +\int_{\partial\Omega}\dfrac{\partial\varphi}{\partial N}\,d\lambda\ ,
\end{equation*}
where $\partial/\partial N$ is the derivative with respect to the unit inner normal $N$. On the other hand we say that a measure $\nu\in M_s(\partial\Omega\times(0,T))$ is the \emph{lateral trace} of $u$ if for any $h\in C_c(\partial\Omega\times(0,T))$,
 \begin{equation*}
 \lim_{\varepsilon\to 0}\int_0^T\int_{\partial\Omega_\varepsilon} \tilde{h}(x,t)u(x,t)\,d\sigma(x) dt =\iint_{\partial\Omega\times(0,T)}h\,d\nu\ ,
 \end{equation*}
where $\Omega_\varepsilon=\{x\in\Omega:\ \delta(x)>\varepsilon\}$ and $\tilde{h}$ is any continuous extension of $h$ in a tubular neighborhood of $\1\Omega\times (0,T)$ vanishing near $t=0, T$.

Our main result is

\begin{theorem}\label{representation-thm}
Let $u$ be a nonnegative classical solution of the heat equation in $Q_T$.  There exists $(\mu,\lambda, \nu)\in M(\Omega, \delta)\times M(\partial\Omega)\times M_s(\partial\Omega\times(0,T))$ such that $(\mu,\lambda)$ is the initial trace and $\nu$ is the lateral trace for $u$.  Moreover,
\begin{eqnarray}\label{representation-formula}
u(x,t)&=&\int_\Omega G(x,t;y,0)\,d\mu(y)+\int_{\partial\Omega}\dfrac{\partial G}{\partial N_y}(x,t;y,0)\,d\lambda(y) \nonumber\\
 &  &  +\iint_{\partial\Omega\times(0,t)}\dfrac{\partial G}{\partial N_y}(x,t;y,s)\,d\nu(y,s)\quad\forall (x,t)\in Q_T,
\end{eqnarray}
where $\partial/\partial N_y$ is the derivative with respect to the unit inner normal $N_y$  at $y\in \partial\Omega$ and $G(x,t;y,s)$ is the Green kernel of the heat equation in $\Omega\times \mathbb{R}$.

Conversely, given any $(\mu,\lambda,\nu)\in M(\Omega, \delta)\times M(\partial\Omega)\times M_s(\partial\Omega\times(0,T))$,  \eqref{representation-formula} gives a classical solution of the heat equation in $Q_T$ whose trace triple is equal to  $(\mu,\lambda,\nu)$.

\end{theorem}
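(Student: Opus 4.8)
\emph{Proof strategy.} The plan is to prove the representation formula \eqref{representation-formula} first and then to read off the existence, characterization, and uniqueness of the trace triple from it. Fix $(x,t)\in Q_T$; for $0<\eta<t$ and small $\varepsilon>0$ with $\delta(x)>\varepsilon$, write the Green representation formula for $u$ on $\Omega_\varepsilon\times(\eta,t)$ using the Green kernel $G_\varepsilon$ of the heat equation in $\Omega_\varepsilon\times\mathbb{R}$ (which vanishes on $\partial\Omega_\varepsilon$):
\begin{equation}\label{greens-id}
u(x,t)=\int_{\Omega_\varepsilon}G_\varepsilon(x,t;y,\eta)\,u(y,\eta)\,dy+\int_\eta^t\!\!\int_{\partial\Omega_\varepsilon}\frac{\partial G_\varepsilon}{\partial N_y}(x,t;y,s)\,u(y,s)\,d\sigma(y)\,ds ,
\end{equation}
both terms being nonnegative since $\partial_{N_y}G_\varepsilon\ge0$ on $\partial\Omega_\varepsilon$ by the Hopf lemma. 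Two a priori estimates drive the argument. Discarding the boundary term in \eqref{greens-id} and using the standard lower bound $G_\varepsilon(x,t;y,\eta)\ge c(x,t-\eta)\,\mathrm{dist}(y,\partial\Omega_\varepsilon)$ (with $c$ uniform in $\varepsilon$ for $\varepsilon$ small, since $\partial\Omega_\varepsilon$ has uniformly bounded geometry) together with $\mathrm{dist}(\cdot,\partial\Omega_\varepsilon)\uparrow\delta$ as $\varepsilon\downarrow0$, one gets, for any fixed interior $x_0$, that $\int_\Omega\delta(y)u(y,\tau)\,dy\le u(x_0,t)/c(x_0,t-\tau)$ for all $0<\tau<t$; hence $u(\cdot,\tau)\in L^1(\Omega,\delta)$ and $\sup_{0<\tau<t}\int_\Omega u(y,\tau)\delta(y)\,dy<\infty$. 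Testing $u_t=\Delta u$ on $\Omega_\varepsilon$ against the first Dirichlet eigenfunction $\phi_1^\varepsilon$ of $\Omega_\varepsilon$ and integrating in time yields $\int_{t_1}^{t_2}\!\int_{\partial\Omega_\varepsilon}u\,d\sigma\,dt\le C\int_{\Omega_\varepsilon}u(\cdot,t_2)\phi_1^\varepsilon\,dx\le C'\int_\Omega u(\cdot,t_2)\delta\,dx$, so that $\int_0^{T_1}\!\int_{\partial\Omega_\varepsilon}u\,d\sigma\,dt$ is bounded uniformly in $\varepsilon$ for every $T_1<T$. Weak-$*$ compactness then provides, along a subsequence, a finite Radon measure on $\overline\Omega$ equal to the limit of $\delta(x)u(x,t)\,dx$ as $t\to0^+$; pairing it with $\varphi/\delta$ — which, for $\varphi\in C_0^\infty(\overline\Omega)$, extends continuously to $\overline\Omega$ with boundary value $\partial\varphi/\partial N$ — splits this limit into the pair $(\mu,\lambda)\in M(\Omega,\delta)\times M(\partial\Omega)$; likewise the measures $u\,d\sigma\,dt$ on $\partial\Omega_\varepsilon\times(0,T)$, pushed to $\partial\Omega\times(0,T)$ by the nearest-point projection, subconverge weak-$*$ to some $\nu\in M_s(\partial\Omega\times(0,T))$.

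Next I pass to the limit in \eqref{greens-id}. With $\eta$ fixed and $\varepsilon\to0$, monotone convergence ($G_\varepsilon\uparrow G$) turns the first term into $\int_\Omega G(x,t;y,\eta)u(y,\eta)\,dy$, so the boundary term also converges; since $G_\varepsilon\to G$ in $C^1$ up to $\partial\Omega$ away from the source $(x,t)$ and the pushed-forward boundary measures converge weak-$*$, that term becomes $\int_\eta^t\!\int_{\partial\Omega}\frac{\partial G}{\partial N_y}(x,t;y,s)\,d\nu(y,s)$ — one chooses $\eta$ outside the at most countable set of times carrying an atom of $\nu$ so the cutoff at $s=\eta$ is harmless. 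Then $\eta\to0^+$: because $G(x,t;\cdot,\eta)\to G(x,t;\cdot,0)\in C_0^\infty(\overline\Omega)$ in $C^1(\overline\Omega)$ while $\sup_\tau\int_\Omega\delta\,u(\cdot,\tau)\,dy<\infty$, the defining relation of the initial trace upgrades $\int_\Omega G(x,t;y,\eta)u(y,\eta)\,dy$ to $\int_\Omega G(x,t;y,0)\,d\mu(y)+\int_{\partial\Omega}\frac{\partial G}{\partial N_y}(x,t;y,0)\,d\lambda(y)$, while the boundary term tends to $\iint_{\partial\Omega\times(0,t)}\frac{\partial G}{\partial N_y}(x,t;y,s)\,d\nu(y,s)$ by dominated convergence (for interior $x$ this kernel is bounded on $\partial\Omega\times(0,t)$, and $\nu(\partial\Omega\times(0,t))<\infty$). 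This is exactly \eqref{representation-formula}, and it holds throughout $Q_T$ once $\varepsilon\to0$ is taken to reach all interior $x$.

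For the converse, given $(\mu,\lambda,\nu)$ I define $u$ by \eqref{representation-formula}: the first integral converges and is $C^\infty$ in $(x,t)$ because $G(x,t;y,0)\le C(x,t)\delta(y)$, and the other two because for interior $(x,t)$ the kernels $\frac{\partial G}{\partial N_y}(x,t;y,0)$ and $\frac{\partial G}{\partial N_y}(x,t;y,s)$ are bounded on $\partial\Omega$ and on $\partial\Omega\times(0,t)$ respectively, with $\lambda$ finite and $\nu(\partial\Omega\times(0,t))<\infty$; since each kernel solves the heat equation away from its source, $u$ is a classical solution, and $u\ge0$ as all kernels and all three measures are nonnegative. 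The same estimates, combined with $\int_\Omega\varphi(x)G(x,t;y,s)\,dx=(e^{(t-s)\Delta_D}\varphi)(y)\to\varphi(y)$ uniformly on $\overline\Omega$ (its inner normal derivative tending to $\partial\varphi/\partial N$ uniformly on $\partial\Omega$) and with the fact that the single-layer heat potential built from $\nu$ admits $\nu$ as its lateral trace, show that $\lim_{t\to0^+}\int_\Omega\varphi u\,dx=\int_\Omega\varphi\,d\mu+\int_{\partial\Omega}\frac{\partial\varphi}{\partial N}\,d\lambda$ and $\lim_{\varepsilon\to0}\int_0^T\!\int_{\partial\Omega_\varepsilon}\tilde h\,u\,d\sigma\,dt=\iint h\,d\nu$ as genuine limits; applied to the $u$ of the forward part this also pins down $(\mu,\lambda,\nu)$ uniquely, so the subsequences selected above were immaterial and $u\mapsto(\mu,\lambda,\nu)$ is a bijection onto the admissible triples. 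The step I expect to be hardest is the $\varepsilon\to0$ passage in the lateral integral of \eqref{greens-id}: one must control the boundary $C^1$ convergence $\frac{\partial G_\varepsilon}{\partial N_y}\to\frac{\partial G}{\partial N_y}$ (boundary Schauder estimates, with care near the endpoint $s=t$ where the kernel degenerates because $x$ is interior) simultaneously with the weak-$*$ convergence of the pushed-forward boundary measures, and — for the converse — with the approximate-identity behaviour of $\frac{\partial G}{\partial N_y}$ along $\partial\Omega_\varepsilon$; by contrast, the a priori estimates and the split of the initial mass into $\mu$ and $\lambda$ should be routine once the framework is in place.
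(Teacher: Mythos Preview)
Your skeleton matches the paper's: Green's representation on $\Omega_\varepsilon\times(\eta,t)$, the two a~priori bounds, then $\varepsilon\to 0$ followed by $\eta\to 0$, with the converse handled by approximation. Two components, however, are organized genuinely differently.

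\textbf{Initial trace.} You take a weak-$*$ subsequential limit of $\delta(x)u(x,t)\,dx$ on $\overline\Omega$ and split it into $(\mu,\lambda)$ via the observation that $\varphi/\delta$ extends continuously to $\overline\Omega$ with boundary value $\partial\varphi/\partial N$. The paper instead forms the elliptic Green potential $w(x,t)=\int_\Omega G(x,y)u(y,t)\,dy$, shows (using the already-constructed lateral trace $\nu$) that $w(\cdot,t)$ plus a $\nu$-correction is monotone in $t$, deduces a pointwise limit $w^*$, and then invokes the Riesz decomposition of the superharmonic $w^*$ together with the Martin representation of its harmonic part to extract $\mu$ and $\lambda$. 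Your construction is considerably more elementary and avoids potential theory altogether.

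\textbf{Full limits versus subsequences.} Because both of your traces arise from compactness, you only get subsequential limits, and you recover full limits a~posteriori by appealing to the converse direction (the map from triples to solutions is injective). The paper proves the full limits directly: for $\nu$, by deriving an integral identity (its Lemma~2.4) that determines $\nu$ intrinsically in terms of $u$ and $\overline\delta$; for $(\mu,\lambda)$, by the monotonicity above. The trade-off is that your forward direction is logically entangled with the converse --- in particular with the boundary behaviour of the double-layer potential with measure density, which you rightly flag as the delicate point --- whereas in the paper each half of the theorem stands on its own.

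One small slip: testing against the first Dirichlet eigenfunction $\phi_1^\varepsilon$ produces, besides the boundary term, the contribution $\lambda_1^\varepsilon\int_{t_1}^{t_2}\!\int_{\Omega_\varepsilon}u\,\phi_1^\varepsilon$ from $\Delta\phi_1^\varepsilon=-\lambda_1^\varepsilon\phi_1^\varepsilon$; your stated inequality omits it. It is harmless, since $\phi_1^\varepsilon\le C\delta$ and you already control $\sup_t\int_\Omega u\,\delta$, but it should be accounted for.
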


 Along a different vein, integral representation formulas for nonnegative solutions of the heat equation in bounded domains in term of the so-called kernel function were given by J.T.~Kemper in [K].  His results were extended to nonnegative solutions of uniformly parabolic divergence equations  by E.B.~Fabes, N.~Garofalo and S.~Salsa in [FGS].  One may consult [M] by M.~Murata for more recent results in this direction.

For the related generalized porous medium equation, existence and uniqueness of a pair of initial traces for its nonnegative solution of
the initial Dirichlet problem in a bounded smooth cylindrical domain was proved by B.E.J.~Dahlberg and C.E.~Kenig in \cite{DK}.  Results on non-zero lateral traces for solutions of the porous medium equation were obtained by K.S.~Chou and Y.C.~Kwong in [CK1] and [CK2].

The plan of the paper is as follows. In section 2 we will prove the existence of initial trace for nonnegative solution of the heat equation in bounded smooth cylindrical domain. In section 3 we will prove the existence of the lateral trace and the representation formula \eqref{representation-formula} for the solution. We will also prove Theorem \ref{representation-thm} in section 3.

\section{Existence of initial trace}
\setcounter{equation}{0}
\setcounter{theorem}{0}

We will assume that $\Omega$ is a smooth bounded domain  in $\mathbb{R}^n$ and $u$ is a nonnegative solution of the heat equation in the cylindrical domain $Q_T$ for some constant $T>0$ for the rest of the paper.
We will fix a sufficiently small $\varepsilon_0>0$ such that the boundary of the subdomain
\begin{equation*}
 \Omega_{\varepsilon}=\{x\in\Omega:\mbox{ dist}\,(x,\partial\Omega)>\varepsilon\},\qquad \varepsilon\in [0,\varepsilon_0]
\end{equation*}
is smooth and for each $x\in\Omega\setminus \Omega_{\varepsilon_0}$, there is a unique $z=z(x)\in \partial\Omega$ satisfying $x=z+\delta(x)N_z$ where  $\delta(x)$ is the distance from $x$ to $\partial\Omega$ and $N_z$ is the unit inner normal of $\partial\Omega$ at $z$. The map $x\mapsto (z,\delta(x))\in \partial \Omega\times(0,\varepsilon_0]$ forms a diffeomorphism from $\Omega\setminus\Omega_{\varepsilon_0}$ to $\partial\Omega\times (0,\varepsilon_0]$.  We will extend the distance function $\delta$ on $\Omega\setminus\Omega_{\varepsilon_0}$  to a smooth positive function $\overline{\delta}$ on $\Omega$ and fix it throughout this paper.  Apparently such choice of $\overline{\delta}$ does not alter $M(\Omega,\delta)$.

We first recall some basic properties of the Green kernel $G(x,t;y,s)$ of  the heat equation (cf. \cite{C}, \cite{I}).  Note that the Green kernel $G(x,t;y,s)$ for the heat equation in $\Omega\times\mathbb{R}$ exists and is a continuous function in $$
\{(x,t,y,s):\ x,y\in\overline{\Omega}, -\infty<s<t<\infty\}
$$
which is smooth in its interior such that

\begin{itemize}

\item for each $(y,s)\in \Omega\times \mathbb{R}, G(\cdot, \cdot; y,s)>0$ satisfies the heat equation in $\Omega\times (s,\infty)$, vanishes on $\partial\Omega\times(s, \infty)$ and satisfies
\begin{equation*}
\lim_{t\searrow s}G(x,t; y,s)=\delta_y\quad\forall s\in\mathbb{R}
\end{equation*}
in the distribution sense where $\delta_y$ is the delta mass at $y$.

\item for each $(x,t)\in \Omega\times\mathbb{R},  G(x,t; \cdot,\cdot)$ satisfies the backward heat equation in $\Omega\times(-\infty, t)$ and $G(x,t; y,s)=0$, $\frac{\partial G_\varepsilon}{\partial N_y} (x,t;y,s)>0$ for all $(y,s)\in \partial\Omega\times(-\infty, t)$.

\item and if we let $\Omega_1\subset \Omega_2$ and $G_1$, $G_2$, be the Green kernel of the heat equation with respect to the cylindrical domains $\Omega_1\times\mathbb{R}$ and $\Omega_2\times \mathbb{R}$ respectively, then \begin{equation*}
G_1(x,t; y,s)\leq G_2(x,t; y,s)\quad\forall x,y\in \Omega_1, s<t.
\end{equation*}
\end{itemize}

The proof of our main theorem Theorem \ref{representation-thm} will be accomplished in several lemmas. First of all, by the integral representation formula for solutions of the heat equation in cylindrical domain (Theorem 5 of Chapter VII of  \cite{C}), $u$ admits the following integral representation, namely, for any $(x,t)\in\Omega_\varepsilon\times (s,T), \varepsilon\in (0,\varepsilon_0)$,

\begin{equation}\label{representation1}
u(x,t)=\int_{\Omega_\varepsilon}G_{\varepsilon}(x,t;y,s)u(y,s)\,dy+\int_s^t\int_{\partial\Omega_\varepsilon}\dfrac{\partial G_{\varepsilon}}{\partial N_y}(x,t;y,\tau)u(y,\tau)\,d\sigma(y)\,d\tau\ ,
\end{equation}
where $G_\varepsilon$ is the Green kernel for the heat equation in $\Omega_{\varepsilon}\times \mathbb{R}$ and $\partial/\partial N_y$ is the derivative with respect to the unit inner normal $N_y$ at $y\in\partial\Omega_\varepsilon$.  Note that both $G_\varepsilon(x,t;y,s)$ and $\frac{\partial G_\varepsilon}{\partial N_y} (x,t;y,s)$ are positive.

Since both terms on the right hand side of \eqref{representation1} are nonnegative, we have, for all $x\in\Omega_\varepsilon,\ 0<s<t<T, \ 0<\varepsilon\le\varepsilon_0,$

\begin{equation}\label{bottom-integral-upper-bd}
\int_{\Omega_\varepsilon}G_{\varepsilon}(x,t;y,s)u(y,s)\,dy\le u(x,t)\ ,
\end{equation}

and

\begin{equation}\label{lateral-integral}
\int_s^t\int_{\partial\Omega_\varepsilon}\dfrac{\partial G_{\varepsilon}}{\partial N_y}(x,t;y,\tau)u(y,\tau)\,d\sigma(y)\,d\tau\le u(x,t)\ .
\end{equation}
Since $G_\varepsilon(x,t; \cdot, \cdot)\uparrow G(x,t; \cdot,\cdot)$ as $\varepsilon\to 0$, letting $\varepsilon\to 0$ in \eqref{bottom-integral-upper-bd}, by the monotone convergence theorem,
we have
\begin{equation}\label{initial-value-integral}
\int_{\Omega}G(x,t;y,s)u(y,s)\,dy\le u(x,t) \quad\forall x\in\Omega,\ 0<s<t<T\ .
\end{equation}

\begin{lem}\label{u-weighted-l1-integral-uniform-bd-lem}
For  any $T_1\in (0,T)$, we have
\begin{equation*}
\sup_{0<t\le T_1}\int_{\Omega} u(x,t)\delta (x)\,dx<\infty.
\end{equation*}
\end{lem}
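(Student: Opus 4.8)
My plan is to read the lemma straight off the pointwise lower bound for $u$ that has already been established in \eqref{initial-value-integral}, by freezing a single interior reference point. Fix once and for all a point $x_*\in\Omega$ and a time $t_1$ with $T_1<t_1<T$ (for instance $t_1=(T_1+T)/2$). Applying \eqref{initial-value-integral} with $x=x_*$, $t=t_1$ gives
\[
\int_\Omega G(x_*,t_1;y,s)\,u(y,s)\,dy\ \le\ u(x_*,t_1)\qquad\text{for every }s\in(0,T_1],
\]
and the right-hand side is a finite constant, independent of $s$, because $u$ is a classical solution and hence continuous at the interior point $(x_*,t_1)\in Q_T$. So the whole statement will follow once I show that $G(x_*,t_1;y,s)$ is bounded below by a fixed positive multiple of $\delta(y)$, uniformly in $y\in\Omega$ and $s\in(0,T_1]$.

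For that lower bound, I would set $\Phi(y,s)=G(x_*,t_1;y,s)$ on $\overline\Omega\times[0,T_1]$. Here $(y,s)$ ranges over a compact set that is disjoint from the pole $\{y=x_*,\ s=t_1\}$ of the Green kernel, so by the regularity of $G$ recalled above (cf.\ \cite{C}, \cite{I}) the function $\Phi$ is smooth in $y$ up to $\partial\Omega$, and $\Phi$ together with $\nabla_y\Phi$ is continuous on $\overline\Omega\times[0,T_1]$. By the stated properties of $G$ one has $\Phi>0$ on $\Omega\times[0,T_1]$, $\Phi=0$ on $\partial\Omega\times[0,T_1]$, and $\partial\Phi/\partial N_y>0$ on $\partial\Omega\times[0,T_1]$, hence $\kappa:=\min_{\partial\Omega\times[0,T_1]}\partial\Phi/\partial N_y>0$ by compactness. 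Using the tubular-neighbourhood diffeomorphism $y\mapsto(z(y),\delta(y))$ on $\Omega\setminus\Omega_{\varepsilon_0}$ together with the mean value theorem along the normal segments, and shrinking $\varepsilon_0$ once more if necessary, one obtains $\Phi(y,s)\ge(\kappa/2)\,\delta(y)$ for all $y\in\Omega\setminus\Omega_{\varepsilon_0}$ and all $s\in[0,T_1]$; on the complementary compact set $\overline{\Omega_{\varepsilon_0}}\subset\Omega$ the positivity and continuity of $\Phi$ give $\Phi(y,s)\ge m>0$, whence $\Phi(y,s)\ge(m/D)\,\delta(y)$ with $D=\sup_\Omega\delta<\infty$. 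Taking $c_0=\min\{\kappa/2,\ m/D\}>0$ then yields $G(x_*,t_1;y,s)\ge c_0\,\delta(y)$ for all $y\in\Omega$ and $s\in(0,T_1]$.

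Combining the two bounds and using $u\ge0$,
\[
c_0\int_\Omega u(y,s)\,\delta(y)\,dy\ \le\ \int_\Omega G(x_*,t_1;y,s)\,u(y,s)\,dy\ \le\ u(x_*,t_1)\qquad\forall\,s\in(0,T_1],
\]
so that $\sup_{0<s\le T_1}\int_\Omega u(x,s)\,\delta(x)\,dx\le u(x_*,t_1)/c_0<\infty$, which is exactly the assertion; the same bound holds with $\delta$ replaced by the fixed smooth extension $\overline{\delta}$, since the two weights are comparable on $\Omega$. There is no serious obstacle here: the only step I would write out with care is the uniform-in-$s$ lower bound $G(x_*,t_1;\cdot,s)\ge c_0\,\delta$, which is just parabolic boundary regularity and the Hopf-type positivity of $\partial G/\partial N_y$ away from the pole of $G$, but it is the place where a little attention is needed so as to keep $c_0$ independent of $s$. (I prefer this route to the alternative of testing the heat equation against a fixed weight such as the first Dirichlet eigenfunction on $\Omega$ and integrating the resulting differential inequality, which works too but forces one to control boundary flux terms.)
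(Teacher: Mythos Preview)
Your proof is correct and follows essentially the same approach as the paper: both fix a reference point $x_0\in\Omega$ and a later time $T_2\in(T_1,T)$, apply \eqref{initial-value-integral}, and then show $G(x_0,T_2;y,s)\ge c\,\delta(y)$ uniformly for $s\in(0,T_1]$ via the positivity of $\partial G/\partial N_y$ on $\partial\Omega$ and a mean-value/compactness argument near the boundary, combined with the trivial interior bound. The only cosmetic difference is that the paper records the two-sided estimate $c_1\delta\le G\le c_1^{-1}\delta$ near $\partial\Omega$, whereas you (correctly) use only the lower bound needed here.
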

\begin{proof}
 We fix some $x_0\in \Omega$ and $T_2\in (T_1,T)$. By \eqref{initial-value-integral}, we have
\begin{equation}\label{green-kernel-integral-upper-bd}
\int_{\Omega} G(x_0,T_2;w,s)u(w,s)\,dw \leq u(x_0,T_2) \quad\forall\ 0<s<T_2.
\end{equation}
Now for any $(w,s)\in (\Omega\setminus\Omega_{\varepsilon_0})\times(0,T_1]$, we have
\begin{align}\label{green-kernel-near-bdary}
&G(x_0,T_2;w,s)\notag\\
=&G(x_0,T_2;z(w)+\delta(w)N_{z(w)},s)-G(x_0,T_2;z(w),s)\notag\\
=&\int_0^1 \dfrac{\partial G}{\partial a}(x_0,T_2;z(w)+a\delta(w)N_{z(w)},s)\,da\notag\\
=&\left(\sum_{j=1}^nN_j(z(w))\int_0^1 \dfrac{\partial G}{\partial y_j}(x_0,T_2;z(w)+a\delta(w)N_{z(w)},s)\,da\right) \delta(w)
\end{align}
where $N_z=(N_1(z),\cdots,N_n(z))$ is the unit inner normal at $z\in\partial\Omega$.
Since $\partial G/\partial N_y(x_0,T_2;y,s)$ is positive for $y\in\partial\Omega$ and $s<T_2$ and also uniformly continuous for $s\in (0,T_1]$, there exist constants $0<\varepsilon_1<\varepsilon_0$ and $c_1>0$ such that
\begin{equation}\label{green-kernel-derivative-near-bdary}
c_1\leq \sum_{j=1}^nN_j(z(w))\dfrac{\partial G}{\partial y_j}(x_0,T_2;z(w)+a\delta(w)N_{z(w)},s)\leq \frac{1}{c_1}
\end{equation}
holds for any $(w,s)\in (\Omega\setminus\Omega_{\varepsilon_1})\times(0,T_1]$.
Therefore by \eqref{green-kernel-near-bdary} and \eqref{green-kernel-derivative-near-bdary}, we have
\begin{equation}\label{green-kernel-distance-fcn-equivalence}
c_1\delta(w)\leq G(x_0,T_2;w,s)\leq c_1^{-1}\delta(w)\quad\forall y\in\Omega\setminus\Omega_{\varepsilon_1},\ 0<s\le T_1.
\end{equation}
Since both $\delta(w)$ and $G(x_0,T_2;w,s)$ are  positive and uniformly bounded above and below by some positive constants in $\overline{\Omega}_{\varepsilon_1}$, by \eqref{green-kernel-integral-upper-bd} and \eqref{green-kernel-distance-fcn-equivalence}
the lemma follows.
\end{proof}

\begin{lem}\label{lateral-u-integral-bd-lem}
For any $T_1\in (0,T)$, we have
\begin{equation*}
\sup_{0<\varepsilon\le\varepsilon_0}\int_0^{T_1}\int_{\partial\Omega_{\varepsilon}}u(x,\tau)\,d\sigma (x)\,d\tau<\infty.
\end{equation*}
\end{lem}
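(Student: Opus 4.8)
The plan is to avoid the Green kernel $G_\varepsilon$ on the moving boundary $\partial\Omega_\varepsilon$ altogether (using it via \eqref{representation1} would force an $\varepsilon$-uniform lower bound for $\partial G_\varepsilon/\partial N_y$ on $\partial\Omega_\varepsilon$, a delicate parabolic Hopf estimate) and instead integrate the equation by parts against a fixed family of torsion-type functions, reducing the matter to an $\varepsilon$-independent \emph{elliptic} estimate. Recall $\varepsilon_0$ was chosen sufficiently small; in particular there is $\varepsilon_1>\varepsilon_0$ such that the distance function $\delta$ is smooth and each $\partial\Omega_\varepsilon$, $\varepsilon\in[0,\varepsilon_1)$, is smooth with second fundamental form bounded uniformly in $\varepsilon$. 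Fix such an $\varepsilon_1$, a number $\rho$ with $\varepsilon_0<\rho<\min(\varepsilon_1,\sup_\Omega\delta)$, and a cutoff $\chi\in C_c^\infty(\Omega)$ with $0\le\chi\le1$, $\chi\not\equiv0$ and $\mathrm{supp}\,\chi\subseteq\overline{\Omega}_\rho$. For each $\varepsilon\in(0,\varepsilon_0]$ let $\phi_\varepsilon\in C^\infty(\overline{\Omega}_\varepsilon)$ solve $\Delta\phi_\varepsilon=-\chi$ in $\Omega_\varepsilon$ with $\phi_\varepsilon=0$ on $\partial\Omega_\varepsilon$. Everything reduces to two $\varepsilon$-uniform facts about $\phi_\varepsilon$, to be proved at the end: (i) $0\le\phi_\varepsilon\le C_3\,\overline{\delta}$ on $\Omega_\varepsilon$; (ii) $\partial\phi_\varepsilon/\partial N\ge c_3>0$ on $\partial\Omega_\varepsilon$, where $N$ is the inner unit normal.

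Granting (i)--(ii), the argument is short. Fix $\varepsilon$ and $0<s<T_1$. Since $u$ is smooth on $\overline{\Omega}_\varepsilon\times[s,T_1]$ and $u_t=\Delta u$, Green's second identity on $\Omega_\varepsilon$ (using $\phi_\varepsilon=0$ and $\Delta\phi_\varepsilon=-\chi$) together with integration over $\tau\in[s,T_1]$ gives
\[
\int_s^{T_1}\!\!\int_{\partial\Omega_\varepsilon}u\,\frac{\partial\phi_\varepsilon}{\partial N}\,d\sigma\,d\tau
=\int_{\Omega_\varepsilon}\phi_\varepsilon\,u(\cdot,T_1)\,dx-\int_{\Omega_\varepsilon}\phi_\varepsilon\,u(\cdot,s)\,dx+\int_s^{T_1}\!\!\int_{\Omega_\varepsilon}u\,\chi\,dx\,d\tau .
\]
On the right the second term is $\ge0$ and is dropped; by (i) and Lemma \ref{u-weighted-l1-integral-uniform-bd-lem} the first term is $\le C_3\sup_{0<t\le T_1}\int_\Omega u(\cdot,t)\,\overline{\delta}\,dx=:C_3A<\infty$; and since $\mathrm{supp}\,\chi$ is a fixed compact subset of $\Omega$ on which $\overline{\delta}$ is bounded below, the last term is $\le\|\chi\|_\infty\int_0^{T_1}\!\int_{\mathrm{supp}\,\chi}u\,dx\,d\tau\le CT_1A<\infty$, again by Lemma \ref{u-weighted-l1-integral-uniform-bd-lem}. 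As these bounds are independent of $\varepsilon$ and $s$, letting $s\to0^+$ (monotone convergence, the integrand being nonnegative) and then using (ii) yields
\[
\int_0^{T_1}\!\!\int_{\partial\Omega_\varepsilon}u\,d\sigma\,d\tau\ \le\ c_3^{-1}\int_0^{T_1}\!\!\int_{\partial\Omega_\varepsilon}u\,\frac{\partial\phi_\varepsilon}{\partial N}\,d\sigma\,d\tau\ \le\ c_3^{-1}(C_3+CT_1)A ,
\]
whose right side does not depend on $\varepsilon$; this is the lemma.

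It remains to establish (i)--(ii). For (i): since $-\Delta\phi_\varepsilon=\chi$ is bounded and $\phi_\varepsilon=0$ on $\partial\Omega_\varepsilon$, comparison with an appropriate multiple of $(\mathrm{diam}\,\Omega)^2-|x-x_0|^2$ gives $0\le\phi_\varepsilon\le C_0$ with $C_0$ independent of $\varepsilon$; in the collar $\{\varepsilon<\delta<\rho\}$ one has $\chi\equiv0$, so $\phi_\varepsilon$ is a nonnegative bounded harmonic function vanishing on the uniformly $C^2$ surface $\partial\Omega_\varepsilon$, whence the boundary gradient estimate for harmonic functions (after flattening, using the uniform interior/exterior ball condition for $\partial\Omega_\varepsilon$) gives $|\nabla\phi_\varepsilon|\le C_1$ near $\partial\Omega_\varepsilon$ uniformly and so $\phi_\varepsilon\le C_3\,\overline{\delta}$ throughout. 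For (ii): by the maximum principle $\phi_\varepsilon\ge\phi^0$ on $\Omega_{\varepsilon_0}$, where $\phi^0$ solves $\Delta\phi^0=-\chi$ in $\Omega_{\varepsilon_0}$ with zero boundary data; since $\phi^0>0$ in $\Omega_{\varepsilon_0}$, $\phi_\varepsilon\ge c_1:=\min_{\partial\Omega_\rho}\phi^0>0$ on $\partial\Omega_\rho$. In the collar $\mathcal C_\varepsilon=\{\varepsilon<\delta<\rho\}$, where $\phi_\varepsilon$ is harmonic, I would compare $\phi_\varepsilon$ from below with the radial sub-barrier $\psi:=c_1\,g(\delta)$, where $g$ solves $g''=Mg'$ on $[\varepsilon,\rho]$ with $g(\varepsilon)=0$, $g(\rho)=1$ and $M\ge\sup_{\{0\le\delta\le\rho\}}|\Delta\delta|$; then $g'>0$, $\Delta\psi=c_1 g'(\delta)\big(M+\Delta\delta\big)\ge0$ in $\mathcal C_\varepsilon$, and $\psi\le\phi_\varepsilon$ on $\partial\mathcal C_\varepsilon=\partial\Omega_\varepsilon\cup\partial\Omega_\rho$ (both vanish on $\partial\Omega_\varepsilon$, and $\psi=c_1\le\phi_\varepsilon$ on $\partial\Omega_\rho$), so the maximum principle gives $\psi\le\phi_\varepsilon$ in $\mathcal C_\varepsilon$; comparing normal derivatives on $\partial\Omega_\varepsilon$ (where both vanish),
\[
\frac{\partial\phi_\varepsilon}{\partial N}\ \ge\ \frac{\partial\psi}{\partial N}\ =\ c_1\,g'(\varepsilon)\ =\ \frac{c_1 M e^{M\varepsilon}}{e^{M\rho}-e^{M\varepsilon}}\ \ge\ \frac{c_1 M}{e^{M\rho}-1}\ =:\ c_3>0
\]
for all $\varepsilon\in(0,\varepsilon_0]$.

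The one step requiring real care is (ii): the Hopf-type lower bound $\partial\phi_\varepsilon/\partial N\ge c_3$ must hold with a \emph{single} constant for all $\varepsilon\in(0,\varepsilon_0]$, and this is exactly what the $\varepsilon$-uniform smoothness and curvature bound for the level sets $\partial\Omega_\varepsilon$ on a collar strictly larger than $\{0<\delta\le\varepsilon_0\}$ buys us, since it lets the single radial sub-barrier $\psi$ serve all $\varepsilon$ at once. Once (i)--(ii) are in hand the remainder uses nothing beyond Green's identity, the maximum principle, and Lemma \ref{u-weighted-l1-integral-uniform-bd-lem}.
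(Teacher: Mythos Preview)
Your proof is correct, but it takes a genuinely different route from the paper's. The paper proceeds directly from the parabolic representation formula \eqref{representation1}: fixing $x_0\in\Omega_{\varepsilon_0}$ and $T_2\in(T_1,T)$, inequality \eqref{lateral-integral} with $s\to 0$ gives
\[
\int_0^{T_1}\!\!\int_{\partial\Omega_\varepsilon}\frac{\partial G_\varepsilon}{\partial N_y}(x_0,T_2;y,\tau)\,u(y,\tau)\,d\sigma\,d\tau\le u(x_0,T_2),
\]
and the uniform lower bound $\partial G_\varepsilon/\partial N_y\ge c_1>0$ on $\partial\Omega_\varepsilon\times[0,T_1]$ is obtained by a short compactness/contradiction argument using the continuous dependence of $G_\varepsilon$ on $\varepsilon\in[0,\varepsilon_0]$. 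In particular, the paper does \emph{not} invoke Lemma~\ref{u-weighted-l1-integral-uniform-bd-lem} at this stage.

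Your approach replaces the parabolic Green kernel by the elliptic torsion-type functions $\phi_\varepsilon$ and trades the compactness argument for an explicit radial sub-barrier in the collar. The payoff is a fully constructive Hopf constant and no appeal to regularity of $G_\varepsilon$ in the domain parameter; the cost is that you must import Lemma~\ref{u-weighted-l1-integral-uniform-bd-lem} to control the right-hand side, and the write-up is somewhat longer. Both arguments ultimately hinge on the same geometric input---the uniform $C^2$ regularity of the level sets $\partial\Omega_\varepsilon$ for $\varepsilon\in[0,\varepsilon_0]$---but exploit it in different places: the paper uses it implicitly through the smooth variation of $G_\varepsilon$, while you use it explicitly to make a single barrier $\psi=c_1 g(\delta)$ work for all~$\varepsilon$.
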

\begin{proof}
Let $x_0\in\Omega_{\varepsilon_0}$, $T_2\in (T_1,T)$ and $0<\varepsilon\le\varepsilon_0$. Putting $x=x_0$ and $t=T_2$ in \eqref{lateral-integral} and letting $s\to 0$,
by the monotone convergence theorem, we have

\begin{equation}\label{green-derivative-lateral-integral}
\int_0^{T_1}\int_{\partial\Omega_{\varepsilon}}\dfrac{\partial G_{\varepsilon}}{\partial N_y}(x_0,T_2;y,\tau)u(y,\tau)\,d\sigma(y)\,d\tau\le u(x_0,T_2)\ .
\end{equation}
We now claim that there exists a constant $c_1>0$ such that

\begin{equation*}
\dfrac{\partial G_{\varepsilon}}{\partial N_y}(x_0,T_2;y,\tau)\ge c_1\ ,\quad\forall y\in\partial\Omega_{\varepsilon},\ 0\le\tau\le T_1,\ 0<\varepsilon\le\varepsilon_0.
\end{equation*}
Suppose the claim does not hold. Then there exist sequences $\{\varepsilon_i\}_{i=1}^{\infty}\subset (0,\varepsilon_0]$, $\{\tau_i\}_{i=1}^{\infty}\subset [0,T_1]$, $\{y_i\}_{i=1}^{\infty}$, such that $y_i\in\partial\Omega_{\varepsilon_i}$ for any $i\in\Z^+$ and
\begin{equation*}
\dfrac{\partial G_{\varepsilon_i}}{\partial N_{y_i}}(x_0,T_2;y_i,\tau_i)\to 0\ ,\quad\mbox{ as }i\to\infty.
\end{equation*}
Then there exists $\overline{\varepsilon}_0\in [0,\varepsilon_0]$, $y_0\in\partial{\Omega}_{\overline{\varepsilon}_0}$, $\tau_0\in [0,T_1]$ and  subsequences of $\{\varepsilon_i\}$, $\{y_j\}$, $\{\tau_i\}$, which we may assume without loss of generality to be the sequences themselves such that $\varepsilon_i\to \overline{\varepsilon}_0$, $y_j\to y_0$ and
$\tau_i\to\tau_0$ as $i\to\infty$. It follows that
\begin{equation*}
\dfrac{\partial G_{\overline{\varepsilon}_0}}{\partial N_{y_0}}(x_0,T_2;y_0,\tau_0)=0.
\end{equation*}
On the other hand since $\partial G_{\overline{\varepsilon}_0}/\partial N_{y_0}(x_0,T_2;y,\tau)$ is positive for any $y\in\partial{\Omega}_{\overline{\varepsilon}_0}$ and $\tau\in [0,T_1]$, contradiction arises and our claim holds.
By \eqref{green-derivative-lateral-integral} and the claim, the lemma follows.

\end{proof}

\begin{lem}
For any $T_1\in (0,T)$, we have
$$
\int_0^{T_1}\int_{\Omega} u(x,t)\,dx\, dt<\infty.
$$
\end{lem}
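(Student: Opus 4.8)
The plan is to decompose the spatial integration over $\Omega$ into an interior part over $\Omega_{\varepsilon_0}$ and a boundary-layer part over $\Omega\setminus\Omega_{\varepsilon_0}$, and to control these two parts by Lemma~\ref{u-weighted-l1-integral-uniform-bd-lem} and Lemma~\ref{lateral-u-integral-bd-lem} respectively.

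For the interior part, note that $\delta$ (equivalently the fixed smooth extension $\overline{\delta}$) is bounded below by a positive constant $c_0$ on the compact set $\overline{\Omega_{\varepsilon_0}}$, so that $u(x,t)\le c_0^{-1}\delta(x)u(x,t)$ there. Integrating over $x\in\Omega_{\varepsilon_0}$ and then over $t\in(0,T_1)$ gives $\int_0^{T_1}\int_{\Omega_{\varepsilon_0}}u(x,t)\,dx\,dt\le c_0^{-1}T_1\,\sup_{0<t\le T_1}\int_\Omega u(x,t)\delta(x)\,dx$, which is finite by Lemma~\ref{u-weighted-l1-integral-uniform-bd-lem}.

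For the boundary-layer part, I would use the coarea formula applied to the Lipschitz function $\delta$ on the tubular region $\Omega\setminus\overline{\Omega_{\varepsilon_0}}=\{x\in\Omega:0<\delta(x)<\varepsilon_0\}$, on which $\delta$ is smooth with $|\nabla\delta|\equiv1$ and $\{x:\delta(x)=\varepsilon\}=\partial\Omega_\varepsilon$ for every $\varepsilon\in(0,\varepsilon_0)$. For each fixed $t$ this yields $\int_{\Omega\setminus\Omega_{\varepsilon_0}}u(x,t)\,dx=\int_0^{\varepsilon_0}\left(\int_{\partial\Omega_\varepsilon}u(x,t)\,d\sigma(x)\right)d\varepsilon$; alternatively one can change variables through the diffeomorphism $x\mapsto(z(x),\delta(x))$, whose Jacobian is bounded above and below on $\partial\Omega\times[0,\varepsilon_0]$, and obtain the same conclusion up to a harmless constant. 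Since the integrand is nonnegative, Tonelli's theorem permits integrating in $t$ and interchanging the order of integration, so that $\int_0^{T_1}\int_{\Omega\setminus\Omega_{\varepsilon_0}}u(x,t)\,dx\,dt=\int_0^{\varepsilon_0}\left(\int_0^{T_1}\int_{\partial\Omega_\varepsilon}u(x,t)\,d\sigma(x)\,dt\right)d\varepsilon\le\varepsilon_0\,\sup_{0<\varepsilon\le\varepsilon_0}\int_0^{T_1}\int_{\partial\Omega_\varepsilon}u(x,\tau)\,d\sigma(x)\,d\tau$, which is finite by Lemma~\ref{lateral-u-integral-bd-lem}. Adding the two bounds proves the lemma.

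I do not expect a genuine obstacle here: the analytic work is already carried out in the two preceding lemmas, and the only point needing care is the passage from the family of surface integrals $\int_{\partial\Omega_\varepsilon}u\,d\sigma$ to the solid integral over the boundary layer. This is exactly what the coarea formula (or the boundary-normal change of coordinates) accomplishes, and the uniform-in-$\varepsilon$ estimate of Lemma~\ref{lateral-u-integral-bd-lem} is precisely what makes the resulting $\varepsilon$-integral converge.
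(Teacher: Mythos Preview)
Your argument is correct, but it is not the route the paper takes. You split $\Omega=\Omega_{\varepsilon_0}\cup(\Omega\setminus\Omega_{\varepsilon_0})$, control the interior piece by Lemma~\ref{u-weighted-l1-integral-uniform-bd-lem}, and control the boundary layer by applying the coarea formula directly to $\delta$ and invoking the uniform lateral bound of Lemma~\ref{lateral-u-integral-bd-lem}. This is clean and uses both preceding lemmas in a straightforward way.

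The paper instead does \emph{not} use Lemma~\ref{lateral-u-integral-bd-lem} here at all. It introduces the torsion function $\varphi_\varepsilon$ solving $-\Delta\varphi_\varepsilon=1$ in $\Omega_\varepsilon$ with zero boundary values, multiplies the heat equation by $\varphi_\varepsilon$ and integrates by parts to obtain an inequality of the form
\[
C_2\int_t^{T_1}\!\int_{\partial\Omega_\varepsilon}u\,d\sigma\,d\tau\le C_3+\int_t^{T_1}\!\int_{\Omega_\varepsilon}u\,dx\,d\tau,
\]
then uses the coarea formula to rewrite the left side as $-\tfrac{d}{d\varepsilon}$ of the right side (up to constants), arriving at a differential inequality $-G'(\varepsilon)\le C_5G(\varepsilon)+C_5$ for $G(\varepsilon)=\int_t^{T_1}\int_{\Omega_\varepsilon}u$. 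A Gronwall-type integration in $\varepsilon$ then yields the uniform bound, and one lets $\varepsilon\to0$, $t\to0$. So the paper derives the result from Lemma~\ref{u-weighted-l1-integral-uniform-bd-lem} alone via a self-improving estimate, whereas you simply combine the two ready-made bounds. Your approach is shorter and more transparent; the paper's approach has the minor virtue of being logically independent of Lemma~\ref{lateral-u-integral-bd-lem}.
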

\begin{proof}
For any $0<\varepsilon<\varepsilon_0$, let $\varphi_\varepsilon$ be the solution of
\begin{equation}\label{phi-eqn}
\left\{\begin{aligned}
-\Delta \varphi&=1\quad\mbox{ in }\Omega_\varepsilon\\
\varphi(x)&=0\quad\forall x\in \partial\Omega_\varepsilon.
\end{aligned}\right.
\end{equation}
 According to elliptic theory (cf. \cite{GT}, \cite{Wi}), by decreasing $\varepsilon_0$   if necessary, there exists a constant $C_2>0$ such that
\begin{equation}\label{green-fcn-derivative-upper-lower-bd}
C_2\leq \frac{\partial\varphi_\varepsilon}{\partial N_y}(y)\leq C_2^{-1}\quad\forall y\in \partial\Omega_{\varepsilon}, \varepsilon\in (0,\varepsilon_0).
\end{equation}
  Moreover
\begin{equation}\label{phi-delta-ineqn}
\varphi_\varepsilon (x)\leq C\delta_{\varepsilon}(x)\leq C\delta(x)\quad\forall x\in\Omega_\varepsilon
\end{equation}
for some constant $C>0$ where $\delta_{\varepsilon}(x)=\mbox{dist}(x,\partial\Omega_{\varepsilon})$. Multiplying the heat equation by $\varphi_{\varepsilon}$ and integrating over
$\Omega_{\varepsilon}\times (t,T_1)$, we have

\begin{align}\label{phi-epsilon-integral-eqn}
&\int_{\Omega_\varepsilon}u(x, T_1)\varphi_{\varepsilon}(x)\,dx -\int_{\Omega_\varepsilon}u(x,t)\varphi_{\varepsilon}(x)\,dx\notag\\
=&\int_t^{T_1}\int_{\Omega_\varepsilon}u\Delta \varphi_{\varepsilon} \,dx\,dt +\int_t^{T_1}\int_{\partial\Omega_\varepsilon}u\dfrac{\partial\varphi_{\varepsilon}}{\partial N}\,d\sigma
\, dt.
\end{align}
By \eqref{phi-eqn}, \eqref{green-fcn-derivative-upper-lower-bd}, \eqref{phi-delta-ineqn}, \eqref{phi-epsilon-integral-eqn}  and Lemma \ref{u-weighted-l1-integral-uniform-bd-lem}, we have
\begin{equation}\label{u-x-t-integral-ineqn}
C_2\int_t^{T_1}\int_{\partial\Omega_\varepsilon}u(x,t)\,d\sigma (x)\,dt
\leq C_3+\int_t^{T_1}\int_{\Omega_\varepsilon}u(x,t)\,dx\,dt
\end{equation}
for some constant $C_3>0$ independent of $\varepsilon\in (0,\varepsilon_0)$. Now using the coarea formula \cite{EG}, we have
$$
 -\dfrac{d }{d\varepsilon}\int_{\Omega_\varepsilon}g\,dx =\int_{\partial \Omega_\varepsilon}\dfrac{g}{|\nabla \delta (x)|}\,d\sigma (x)
$$
where
$$
g(x)=\int_{t}^{T_1} u(x,\tau)\,d\tau
$$
and noting that
\begin{equation*}
|\nabla \delta (x)|\geq C_4\quad\forall x\in\Omega\setminus\Omega_{\varepsilon_0}
\end{equation*}
for some constant $C_4>0$, by \eqref{u-x-t-integral-ineqn} we get
\begin{equation}\label{g-epsilon-ineqn}
-\dfrac{d }{d\varepsilon}G(\varepsilon) \leq C_5G(\varepsilon) + C_5
\end{equation}
for some constant $C_5>0$ independent of $\varepsilon\in (0,\varepsilon_0)$ where
$$
G(\varepsilon)=\int_{t}^T\int_{\Omega_\varepsilon}u\,dx\, dt.
$$
Integrating \eqref{g-epsilon-ineqn} from $\varepsilon$ to $\varepsilon_0$, we have
\begin{equation}\label{u-integral-ineqn2}
\int_t^T\int_{\Omega_\varepsilon}u\,dx\,dt \leq e^{C_5(\varepsilon_0-\varepsilon)}\int_t^T\int_{\Omega_{\varepsilon_0}}u\,dx\,dt +e^{C_5(\varepsilon_0-\varepsilon)}.
\end{equation}
Letting first $\varepsilon\to 0$ and then $t\to 0$ in \eqref{u-integral-ineqn2}, the lemma follows.
\end{proof}

\begin{lem}\label{existence-bdary-trace-lem}
There exists a Radon measure $\nu\in M_s(\partial\Omega\times(0,T))$ satisfying
\begin{equation}\label{lateral-limit}
 \lim_{\varepsilon\to 0^+}\int_0^{T_1}\int_{\partial\Omega_{\varepsilon}}u(x,\tau)\tilde{h}(x,\tau)\,d\sigma(x)\,d\tau=\iint_{\partial\Omega\times(0,T_1)}h\,d\nu
\quad\forall 0<T_1<T
\end{equation}
for any bounded continuous function $h$ on $\partial\Omega\times(0,T)$ where $\tilde{h}$ is any bounded continuous extension of $h$ in a tubular neighbourhood of $\partial\Omega\times(0,T)$. Moreover the measure $\nu$ is uniquely given by
\begin{equation}\label{nu-identity}
\iint_{\partial\Omega\times(0,T)}h\,d\nu
=\int_0^T\int_{\Omega}u\left(\overline{h}\Delta \overline{\delta}+2\nabla\overline{h}\cdot \nabla \overline{\delta} +\overline{\delta}\Delta\overline{h}+\overline{\delta}\overline{h}_t\right)\,dx\,dt
\end{equation}
for any $h\in C_c(\partial\Omega\times(0,T))$ where $\overline{h}$ is a continuous extension of $h$ to $\overline{\Omega}\times (0,T)$ vanishing near $t=0, T$, such that  $\overline{h}$ is constant along each inner normal direction of $\partial\Omega$ in $(\overline{\Omega}\setminus\Omega_{\varepsilon_0})\times (0,T)$.
\end{lem}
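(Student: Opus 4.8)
Here $Q_\varepsilon:=\Omega_\varepsilon\times(0,T)$. The plan is to realise $\nu$ as a vague limit of the measures $\nu_\varepsilon$ on $\partial\Omega\times(0,T)$ defined, for $h\in C_c(\partial\Omega\times(0,T))$, by
$\iint h\,d\nu_\varepsilon:=\int_0^T\int_{\partial\Omega_\varepsilon}u(x,t)\,\overline h(x,t)\,d\sigma(x)\,dt$,
where $\overline h$ is the extension of $h$ that is constant along the inner normals in $(\overline\Omega\setminus\Omega_{\varepsilon_0})\times(0,T)$ and vanishes near $t=0,T$; equivalently $\nu_\varepsilon$ is the pushforward of $u\,d\sigma\otimes dt$ on $\partial\Omega_\varepsilon\times(0,T)$ under $x\mapsto z(x)$. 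Note that, for such $h$, $\iint h\,d\nu_\varepsilon$ is exactly the left side of \eqref{lateral-limit} with $\widetilde h=\overline h$, so vague convergence of $\nu_\varepsilon$ \emph{is} essentially \eqref{lateral-limit}. The $\nu_\varepsilon$ are nonnegative Radon measures and, by Lemma~\ref{lateral-u-integral-bd-lem}, $\nu_\varepsilon(\partial\Omega\times(0,T_1))\le C_{T_1}$ uniformly in $\varepsilon\in(0,\varepsilon_0]$ for each $T_1<T$; hence $\{\nu_\varepsilon\}$ is vaguely precompact and every vague cluster point $\nu$ is a nonnegative element of $M_s(\partial\Omega\times(0,T))$. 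The whole argument hinges on showing that any such cluster point must satisfy \eqref{nu-identity}: since the right side of \eqref{nu-identity} depends only on $u$, this forces $\nu$ to be unique, whence $\lim_{\varepsilon\to0}\nu_\varepsilon=\nu$ exists and both \eqref{nu-identity} and \eqref{lateral-limit} follow.

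To produce the identity I would, for $h\in C_c^\infty(\partial\Omega\times(0,T))$ (so $\overline h$ may be taken smooth), test the heat equation $u_t=\Delta u$ against $\psi:=\overline\delta\,\overline h$, integrate over $Q_\varepsilon$, integrate by parts once in $t$ (no time boundary terms, since $\overline h\equiv0$ near $t=0,T$) and twice in $x$ by Green's identity. The geometric facts that drive the computation are that on $\partial\Omega_\varepsilon=\{\delta=\varepsilon\}$ (which lies in the tube $\{\delta<\varepsilon_0\}$) one has $\overline\delta=\varepsilon$, $\nabla\overline\delta=N$, $|\nabla\overline\delta|=1$ and $\partial\overline h/\partial N=0$, so that $\psi=\varepsilon\,\overline h$ and $\partial\psi/\partial N=\overline h$ there. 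Consequently the ``$u\,\partial\psi/\partial N$'' boundary term reproduces $\int_0^T\int_{\partial\Omega_\varepsilon}u\,\overline h\,d\sigma\,dt$, the interior term is $\int_{Q_\varepsilon}u\,(\psi_t+\Delta\psi)$ with $\psi_t+\Delta\psi=\overline\delta\,\overline h_t+\overline h\Delta\overline\delta+2\nabla\overline h\cdot\nabla\overline\delta+\overline\delta\Delta\overline h$ — the integrand occurring in \eqref{nu-identity} — and the only other term is a boundary remainder $\varepsilon\int_0^T\int_{\partial\Omega_\varepsilon}\overline h\,(\partial u/\partial N)\,d\sigma\,dt$, carrying the factor $\varepsilon$ precisely because $\psi$ vanishes on $\partial\Omega$. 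Collecting terms yields \eqref{nu-identity} with $\Omega$ replaced by $\Omega_\varepsilon$, up to that remainder.

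It remains to kill the remainder as $\varepsilon\to0$. Since $\partial\overline h/\partial N=0$ on $\partial\Omega_\varepsilon$, one further application of the divergence theorem gives $\int_{\partial\Omega_\varepsilon}\overline h\,(\partial u/\partial N)\,d\sigma=\int_{\Omega_\varepsilon}u\,\Delta\overline h\,dx-\int_{\Omega_\varepsilon}\overline h\,u_t\,dx$, and integrating in $t$ (with $\overline h(\cdot,0)=\overline h(\cdot,T)=0$) the remainder becomes $\varepsilon\int_{Q_\varepsilon}u\,(\Delta\overline h+\overline h_t)\,dx\,dt=O(\varepsilon)$ by the bound $\int_0^{T_2}\int_\Omega u\,dx\,dt<\infty$ from the preceding lemma, $T_2<T$ being chosen with $\overline h\equiv0$ for $t\ge T_2$. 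Letting $\varepsilon\to0$: the interior integral tends to the right side of \eqref{nu-identity} by dominated convergence (its integrand is bounded on $\Omega$ — note $\overline\delta$, $\nabla\overline\delta$, $\overline h$ and its derivatives are bounded and $\Delta\overline\delta$ is bounded near $\partial\Omega$ since $\overline\delta=\delta$ there — and vanishes for $t\ge T_2$, hence is dominated by $C u$ on $\Omega\times(0,T_2)\in L^1$), while along any vaguely convergent subsequence the left side tends to $\iint h\,d\nu$. This proves \eqref{nu-identity} for smooth $h$; by vague density of $C_c^\infty$ and the uniform mass bound, $\nu$ is unique, $\nu_\varepsilon\to\nu$ vaguely (full limit), and \eqref{lateral-limit} holds for $h\in C_c$ with the canonical extension. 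Passing to an arbitrary bounded continuous $h$ and arbitrary continuous extension $\widetilde h$ is then routine: two extensions of one $h$ differ by $o(1)$ on $\partial\Omega_\varepsilon$, so by Lemma~\ref{lateral-u-integral-bd-lem} the choice of $\widetilde h$ is immaterial; a $C_c^\infty$ approximation of $h$ costs $o(1)$ uniformly in $\varepsilon$ by the same bound; and the truncation of the time integral at $T_1$ is handled by the portmanteau theorem (valid whenever $\nu(\partial\Omega\times\{T_1\})=0$, i.e.\ for all but countably many $T_1$).

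The step I expect to be the crux is the derivation and exploitation of this $\varepsilon$-level identity: selecting the test function $\overline\delta\,\overline h$ (which vanishes on $\partial\Omega$, so no uncontrolled boundary value of $u$ survives, yet is ``$\varepsilon$-thick'' on $\partial\Omega_\varepsilon$ so as to recapture the lateral integral) and then disposing of the boundary remainder $\varepsilon\int_0^T\int_{\partial\Omega_\varepsilon}\overline h\,(\partial u/\partial N)\,d\sigma\,dt$ by a second integration by parts — which is exactly where the a priori estimates of the three preceding lemmas, and especially $\int_0^{T_1}\int_\Omega u\,dx\,dt<\infty$, are indispensable. Everything after that (vague compactness of $\{\nu_\varepsilon\}$, passage to the limit, the approximation arguments for general $h$ and $\widetilde h$) is soft, the only residual subtlety being the behaviour at time-levels carrying an atom of $\nu$.
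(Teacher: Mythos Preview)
Your proof is correct and follows the same overall architecture as the paper's: define $\nu_\varepsilon$ as the pushforward of $u\,d\sigma\,dt$ on $\partial\Omega_\varepsilon\times(0,T)$ along the normal projection, invoke Lemma~\ref{lateral-u-integral-bd-lem} for vague precompactness in $M_s(\partial\Omega\times(0,T))$, and pin down the limit uniquely by deriving \eqref{nu-identity} through an integration by parts against a test function built from $\overline{\delta}$ and $\overline{h}$. The one genuine difference is the choice of that test function. You test with $\psi=\overline{\delta}\,\overline{h}$, which equals $\varepsilon\overline{h}$ on $\partial\Omega_\varepsilon$ and therefore leaves a boundary remainder $\varepsilon\int_0^T\int_{\partial\Omega_\varepsilon}\overline{h}\,(\partial u/\partial N)\,d\sigma\,dt$; you then correctly kill it by a second Green's identity (using $\partial\overline{h}/\partial N=0$ on $\partial\Omega_\varepsilon$) together with the unweighted bound $\int_0^{T_2}\int_\Omega u<\infty$ from the preceding lemma. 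The paper instead tests with $\eta_\varepsilon=(\overline{\delta}-\varepsilon)\overline{h}$, which \emph{vanishes} on $\partial\Omega_\varepsilon$ while still satisfying $\partial\eta_\varepsilon/\partial N=\overline{h}$ there; the $\eta_\varepsilon\,\partial u/\partial N$ boundary term is then identically zero and no remainder ever appears, so the passage to \eqref{nu-identity} is a single dominated-convergence step. In effect your ``second integration by parts'' is absorbed into the $\varepsilon$-shift of the test function. Both routes are valid; the paper's is marginally slicker, while yours makes more explicit use of the $L^1(Q_{T_1})$ estimate. Your treatment of the soft parts (independence of the extension $\widetilde h$, density of $C_c^\infty$, the portmanteau caveat about atoms at $t=T_1$) is fine.
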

\begin{proof}
For every bounded continuous function $h$ on $\partial\Omega\times(0,T),$ we extend it to  a bounded continuous function $\overline{h}$ on $(\overline{\Omega}\setminus\Omega_{\varepsilon_0})\times (0,T)$ such that $\overline{h}$ is constant along each inner normal direction of $\partial\Omega$. In view of Lemma \ref{lateral-u-integral-bd-lem}, it suffices to prove the lemma by taking  $\tilde{h}$ to be $\overline{h}$.

First we fix $T_1\in (0,T)$.  For each $\varepsilon\in(0,\varepsilon_0]$, define a linear functional $\Lambda_\varepsilon$ on $C_c(\partial\Omega\times(0,T_1))$ by
$$
\Lambda_\varepsilon h=\int_0^{T_1}\int_{\partial\Omega_{\varepsilon}}\overline{h}(x,\tau)u(x,\tau)\,d\sigma (x)\,d\tau\quad\forall h\in C_c(\partial\Omega\times(0,T_1)).
$$
By Lemma \ref{lateral-u-integral-bd-lem}, we have
\begin{equation*}
|\Lambda_\varepsilon h|\leq C\|h\|_{L^\infty}\quad\forall h\in C_c(\partial\Omega\times(0,T_1)).
\end{equation*}
Therefore by the Riesz representation theorem (Theorem 7.2.8 of \cite{Co}), there exists a Radon measure $\nu_\varepsilon$ on $\partial\Omega\times(0,T_1)$ such that
\begin{align}\label{functional-identity}
&\Lambda_\varepsilon h =\iint_{\partial\Omega\times (0,T)} h\,d\nu_\varepsilon\quad \forall h\in C_c(\partial\Omega\times(0,T_1))\notag\\
\Rightarrow\quad&\int_0^{T_1}\int_{\partial\Omega_{\varepsilon}}\overline{h}(x,\tau)u(x,\tau)\,d\sigma (x)\,d\tau =\iint_{\partial\Omega\times (0,T)} h\,d\nu_\varepsilon
\quad\forall h\in C_c(\partial\Omega\times(0,T_1)).
\end{align}
We now choose a sequence of monotone increasing functions $\{h_i\}_{i=1}^{\infty}\subset C_c(\partial\Omega\times(0,T_1))$, $0\le h_i\le 1$ for any $i\in\Z^+$, satisfying
\begin{equation*}
h_i(x,t)=1\quad\forall x\in\partial\Omega, \tfrac{1}{i}\le t\le T_1-\tfrac{1}{i},i\in\Z^+.
\end{equation*}
Putting $h=h_i$ in \eqref{functional-identity} and letting $i\to\infty$, by Lemma \ref{lateral-u-integral-bd-lem}, we have
\begin{equation}\label{nu-epsilon-measure-bd}
\sup_{0<\varepsilon\le\varepsilon_0}\nu_\varepsilon(\partial\Omega\times (0,T_1))<\infty.
\end{equation}
The measure $\nu_\varepsilon$ depends on $T_1$.  However, by letting $T_1\nearrow T$ it is clear that we could define a Radon measure, still denoted by $\nu_\varepsilon$ in $M_s(\partial\Omega\times(0,T))$ so that \eqref{nu-epsilon-measure-bd} remains valid for any $0<T_1<T$ and
\begin{equation}\label{nu-epsilon-defn}
\iint_{\partial\Omega\times(0,T)}h\,d\nu_\varepsilon=\int_0^T\int_{\partial\Omega_\varepsilon}\overline{h}(x,t)u(x,t)\,d\sigma(x)\, dt\quad\forall h\in C_c(\partial\Omega\times(0,T)).
\end{equation}
By \eqref{nu-epsilon-measure-bd} and weak compactness any sequence $\{\varepsilon_j\}_{j=1}^{\infty}\subset (0,\varepsilon_0)$, $\varepsilon_j\to 0$ as $j\to\infty$, has a subsequence which we may assume without loss of generality to be the sequence itself that converges weakly to $\nu$ as $j\to \infty$ for some $\nu\in M_s(\partial\Omega\times (0,T))$. That is
\begin{equation*}
\lim_{j\to\infty}\iint_{\partial\Omega\times(0,T)}h\,d\nu_{\varepsilon_j}=\iint_{\partial\Omega\times(0,T)}h\,d\nu\quad\forall h\in C_c(\partial\Omega\times(0,T))
\end{equation*}
and
\begin{equation*}
\nu(\partial\Omega\times(0,T_1))\leq \sup_{0<\varepsilon\le\varepsilon_0}\nu_\varepsilon(\partial\Omega\times (0,T_1))<\infty\quad\forall 0<T_1<T.
\end{equation*}

We now claim that the limit $\nu$ is unique. In order to prove this claim
we extend  $\overline{h}$ to a bounded continuous function on $\overline{\Omega}\times (0,T)$ vanishing near $t=0,T$. For any $\varepsilon\in (0,\varepsilon_0)$, let $\eta_\varepsilon(x,t)=(\overline{\delta}(x)-\varepsilon)\overline{h}(x,t)$.  For any $\varepsilon\in (0,\varepsilon_0)$, multiplying  the heat equation for $u$ by $\eta_{\varepsilon}$ and integrating over $\Omega_{\varepsilon}\times (0,T)$, by integration by parts, we have
\begin{eqnarray}\label{u-h1-eqn}
&& \int_0^T\int_{\Omega_\varepsilon} u\left[\overline{h}\Delta \overline{\delta}(x)+
2\nabla\overline{h}\cdot \nabla \overline{\delta}(x) +(\overline{\delta}(x)-\varepsilon)\Delta \overline{h}+(\overline{\delta}(x)-\varepsilon)\overline{h}_t\right]\,dx\,dt\notag\\
&=& -\int_0^T\int_{\partial\Omega_\varepsilon}u(x,t)\overline{h}(x,t)\dfrac{\partial\overline{\delta}}{\partial N}(x)\,d\sigma(x)\,dt\notag\\
&=&\int_0^T\int_{\partial\Omega_\varepsilon}u(x,t)\overline{h}(x,t)\,d\sigma(x)\,dt
\end{eqnarray}
where $\partial/\partial N$ is the derivative with respect to the unit inner normal
$N$ at $\partial\Omega_{\varepsilon}$. Since
\begin{equation*}
\overline{h}\Delta\overline{\delta}(x)=\nabla \overline{h}\cdot \nabla\overline{\delta}(x)=0\quad\forall x\in\Omega\setminus\Omega_{\varepsilon_0},
\end{equation*}
by \eqref{u-h1-eqn}, we have
\begin{align}\label{u-h1-eqn2}
& \int_0^T\int_{\Omega_{\varepsilon_0}}\left( u\overline{h}\Delta \overline{\delta}(x)+
2u\nabla \overline{h}\cdot \nabla \overline{\delta}(x)\right)\,dx\,dt\notag\\
&\qquad +\int_0^T\int_{\Omega_\varepsilon} u\left[(\overline{\delta}(x)-\varepsilon)\Delta \overline{h}+(\overline{\delta}(x)-\varepsilon)\overline{h}_t\right]\,dx\,dt\notag\\
=&\int_0^T\int_{\partial\Omega_\varepsilon}u(x,t)\overline{h}(x,t)\,d\sigma(x)\,dt
\quad\forall 0<\varepsilon<\varepsilon_0.
\end{align}
Putting $\varepsilon=\varepsilon_j$ in \eqref{u-h1-eqn2} and letting $j\to\infty$, by Lemma \ref{lateral-u-integral-bd-lem} and the Lebesgue dominated convergence theorem, we get \eqref{nu-identity}.
Hence the measure $\nu$ is uniquely determined by \eqref{nu-identity} and the  claim holds. Since the sequence $\{\varepsilon_j\}_{j=1}^{\infty}$ is arbitrary, $\nu_{\varepsilon}$ converges weakly to $\nu$ as $\varepsilon\to 0$. This together with  \eqref{nu-epsilon-defn} implies that $\nu$ is the lateral trace of $u$.

Finally, since $\nu$ is finite on $\partial\Omega\times(0,T_1)$ for any $0<T_1<T$, by an approximation argument, one can show that \eqref{lateral-limit} holds not only for any $h\in C_c(\partial\Omega\times(0,T))$ but also for any bounded continuous functions on $\partial\Omega\times(0,T)$.
\end{proof}

\begin{lem}\label{initial-trace-existence-lem}
There exists $(\mu, \lambda)\in M(\Omega,\delta)\times M(\partial\Omega)$ such that for any $\eta\in C_0^\infty(\overline{\Omega})$, we have
\begin{equation}\label{u-initial-value-integral-sense}
\lim_{t\to 0^+}\int_\Omega u(x,t)\eta(x)\,dx = \int_\Omega \eta\,d\mu +\int_{\partial\Omega}\dfrac{\partial \eta}{\partial N}\,d\lambda
\end{equation}
where $\partial/\partial N$ is the derivative with respect to the unit inner normal
$N$ at $\partial\Omega$.
\end{lem}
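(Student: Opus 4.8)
The plan is to combine weak-$*$ compactness of the weighted densities $u(\cdot,t)\,\overline\delta\,dx$ on $\overline\Omega$ with an energy identity that forces $t\mapsto\int_\Omega u(\cdot,t)\eta\,dx$ to \emph{converge}, not merely subconverge, as $t\to0^+$. Fix $\eta\in C_0^\infty(\overline\Omega)$. Since $\eta$ vanishes on $\partial\Omega$ we may write $\eta=\overline\delta\,\psi$ with $\psi\in C^\infty(\overline\Omega)$, and one checks $\psi|_{\partial\Omega}=\partial\eta/\partial N$, so $\eta/\overline\delta=\psi$ extends to an element of $C(\overline\Omega)$. For $0<\varepsilon<\varepsilon_0$ put $\eta_\varepsilon:=(\overline\delta-\varepsilon)\psi$; since $\overline\delta=\delta$ near $\partial\Omega_\varepsilon=\{\overline\delta=\varepsilon\}$ and $\partial\overline\delta/\partial N=|\nabla\delta|=1$ there, $\eta_\varepsilon$ vanishes on $\partial\Omega_\varepsilon$ and $\partial\eta_\varepsilon/\partial N=\psi$ on $\partial\Omega_\varepsilon$. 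Multiplying the heat equation by $\eta_\varepsilon$, integrating over $\Omega_\varepsilon\times(t_1,t_2)$ with $0<t_1<t_2\le T_1<T$ and integrating by parts in space, the boundary term carrying $\partial u/\partial N$ drops out while the one carrying $\partial\eta_\varepsilon/\partial N=\psi$ survives, leaving
\begin{equation*}
\int_{\Omega_\varepsilon}u(\cdot,t_2)\,\eta_\varepsilon\,dx-\int_{\Omega_\varepsilon}u(\cdot,t_1)\,\eta_\varepsilon\,dx=\int_{t_1}^{t_2}\int_{\Omega_\varepsilon}u\,\Delta\eta_\varepsilon\,dx\,d\tau+\int_{t_1}^{t_2}\int_{\partial\Omega_\varepsilon}u\,\psi\,d\sigma\,d\tau .
\end{equation*}

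I would then let $\varepsilon\to0$. Since $|\eta_\varepsilon|\le C\,\overline\delta$ on $\Omega_\varepsilon$ uniformly in $\varepsilon$ and $\Delta\eta_\varepsilon\to\Delta\eta$ uniformly, Lemma~\ref{u-weighted-l1-integral-uniform-bd-lem}, the bound $\int_0^{T_1}\int_\Omega u\,dx\,dt<\infty$ and dominated convergence give that the left side and the first integral on the right converge to $\int_\Omega u(\cdot,t_2)\eta\,dx-\int_\Omega u(\cdot,t_1)\eta\,dx$ and $\int_{t_1}^{t_2}\int_\Omega u\,\Delta\eta\,dx\,d\tau$ respectively; hence the last integral converges too, to a limit $J(t_1,t_2)$, and
\begin{equation*}
\int_\Omega u(\cdot,t_2)\,\eta\,dx-\int_\Omega u(\cdot,t_1)\,\eta\,dx=\int_{t_1}^{t_2}\int_\Omega u\,\Delta\eta\,dx\,d\tau+J(t_1,t_2),\qquad |J(t_1,t_2)|\le\|\psi\|_\infty\limsup_{\varepsilon\to0}\int_0^{t_2}\int_{\partial\Omega_\varepsilon}u\,d\sigma\,d\tau .
\end{equation*}
As $t_1,t_2\to0$ the first term on the right tends to $0$ because $\int_0^{T_1}\int_\Omega u\,dx\,dt<\infty$; and by Lemma~\ref{existence-bdary-trace-lem} (applied to a continuous approximation of the indicator of $(0,t_2]$ in time) the $\limsup$ above is at most $\nu(\partial\Omega\times(0,t_2])$, which tends to $0$ since $\nu\in M_s(\partial\Omega\times(0,T))$. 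Hence $t\mapsto\int_\Omega u(\cdot,t)\eta\,dx$ is Cauchy as $t\to0^+$, so $L(\eta):=\lim_{t\to0^+}\int_\Omega u(\cdot,t)\eta\,dx$ exists for every $\eta\in C_0^\infty(\overline\Omega)$.

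To represent $L$ by a trace pair I would invoke compactness. By Lemma~\ref{u-weighted-l1-integral-uniform-bd-lem} the nonnegative Radon measures $\rho_t:=u(\cdot,t)\,\overline\delta\,dx$ on the compact set $\overline\Omega$ have uniformly bounded total mass on $(0,T_1]$, so some sequence $t_j\downarrow0$ satisfies $\rho_{t_j}\rightharpoonup\rho$ weakly-$*$. Set $\mu:=\overline\delta^{-1}\big(\rho\!\restriction_\Omega\big)$ and $\lambda:=\rho\!\restriction_{\partial\Omega}$; then $\lambda\in M(\partial\Omega)$, and $\mu\in M(\Omega,\delta)$ because $\delta/\overline\delta$ is bounded on $\Omega$ and $\rho$ is finite. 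Since $\eta/\overline\delta\in C(\overline\Omega)$ has boundary value $\partial\eta/\partial N$,
\begin{equation*}
\int_\Omega u(\cdot,t_j)\,\eta\,dx=\int_{\overline\Omega}\frac{\eta}{\overline\delta}\,d\rho_{t_j}\ \longrightarrow\ \int_{\overline\Omega}\frac{\eta}{\overline\delta}\,d\rho=\int_\Omega\eta\,d\mu+\int_{\partial\Omega}\frac{\partial\eta}{\partial N}\,d\lambda ,
\end{equation*}
and comparing with the previous step yields $L(\eta)=\int_\Omega\eta\,d\mu+\int_{\partial\Omega}(\partial\eta/\partial N)\,d\lambda$, i.e.\ \eqref{u-initial-value-integral-sense}.

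I expect the real work to be in the first two paragraphs, i.e.\ the \emph{full} convergence in $t$. The decisive point is to use a cutoff $\eta_\varepsilon$ that vanishes exactly on $\partial\Omega_\varepsilon$ (the device already used for the test function in Lemma~\ref{existence-bdary-trace-lem}), which is what removes the uncontrolled flux $\partial u/\partial N$ on $\partial\Omega_\varepsilon$; the surviving boundary term $J(t_1,t_2)$ must then be absorbed into $\nu(\partial\Omega\times(0,t_2])$, whose decay as $t_2\to0$ is exactly the assertion that the lateral trace carries no mass at the bottom of the lateral boundary. Once convergence is established, the representation by $(\mu,\lambda)$ follows routinely from weak-$*$ compactness and the decomposition $\overline\Omega=\Omega\sqcup\partial\Omega$.
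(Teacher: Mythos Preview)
Your argument is correct, and it is genuinely different from the paper's proof. The paper works with the elliptic Green potential $w(x,t)=\int_\Omega G(x,y)u(y,t)\,dy$: using \eqref{w-epsilon-t-derivative} it shows that $w(x,t)$ plus a lateral correction is monotone in $t$, hence has a pointwise limit $w^*(x)$ as $t\to0$; then it invokes the Riesz decomposition of the superharmonic function $w^*$ to produce $\mu$ and the Martin representation of the resulting harmonic remainder to produce $\lambda$, finally unwinding via $\int_\Omega u\eta=-\int_\Omega w\,\Delta\eta$. By contrast, you bypass potential theory entirely: you prove a Cauchy criterion for $t\mapsto\int_\Omega u(\cdot,t)\eta$ directly from an energy identity with the test function $(\overline\delta-\varepsilon)\psi$ (exactly the device of Lemma~\ref{existence-bdary-trace-lem}), the residual boundary term being dominated by $\nu(\partial\Omega\times(0,t_2))\to0$; then you recover $(\mu,\lambda)$ by weak-$*$ compactness of $u\,\overline\delta\,dx$ on the compact set $\overline\Omega$ and splitting the limit measure across $\Omega$ and $\partial\Omega$ via the continuous test function $\eta/\overline\delta$. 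Your route is shorter and avoids the Riesz and Martin representation theorems, at the cost of relying on the full statement \eqref{lateral-limit} for bounded (not just compactly supported) $h$, so that $\lim_{\varepsilon\to0}\int_0^{t_2}\!\int_{\partial\Omega_\varepsilon}u=\nu(\partial\Omega\times(0,t_2))$; the paper's route is more classical and mirrors the Dahlberg--Kenig argument cited there.
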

\begin{proof}
For any $t\in (0,T)$, $0<\varepsilon\le\varepsilon_0$, let
\begin{equation*}
w_\varepsilon(x,t):=G_{\varepsilon}(u(\cdot,t))(x):=\int_{\Omega_{\varepsilon}}G_{\varepsilon}(x,y)u(y,t)\,dy\quad\forall x\in\overline{\Omega}_{\varepsilon}
\end{equation*}
and
\begin{equation}\label{w-defn}
w(x,t):=G(u(\cdot,t))(x):=\int_{\Omega}G(x,y)u(y,t)\,dy\quad\forall x\in\Omega
\end{equation}
be the  Green potential of $u(\cdot,t)$ with respect to the domain $\Omega_{\varepsilon}$ and $\Omega$ respectively where $G(x,y)$, $G_{\varepsilon}(x,y)$, are the Green functions for the Laplacian $-\Delta$ on $\Omega$, $\Omega_{\varepsilon}$, respectively. Then $w_{\varepsilon}\ge 0$ on $\Omega_{\varepsilon}$. By elliptic regularity theory \cite{GT}, for each $0<t<T$, $w_{\varepsilon}$ is a classical solution of
\begin{equation*}
\left\{\begin{aligned}
-\Delta w_{\varepsilon}=&u(\cdot,t)\quad\mbox{ in }\Omega_{\varepsilon}\\
w_{\varepsilon}=&0\qquad\quad\mbox{ on }\partial\Omega_{\varepsilon}
\end{aligned}\right.
\end{equation*}
and $w(\cdot, t)$ is a classical solution of
\begin{equation}\label{w-elliptic-eqn}
-\Delta w(\cdot, t)=u(\cdot, t)\quad\mbox{ in }\Omega.
\end{equation}
Now by Theorem 2.3 of \cite{Wi} for any $x\in\Omega$, there exists a constant $C>0$ such that
\begin{equation}\label{elliptic-green-fcn-estimate}
G(x,y)\le C\delta(y)|x-y|^{1-n}\quad\forall x,y\in\Omega.
\end{equation}
Since for any $x,y\in\Omega$, $G_{\varepsilon}(x,y)$ increases to $G(x,y)$ as $\varepsilon\to 0$, by \eqref{elliptic-green-fcn-estimate}, Lemma \ref{u-weighted-l1-integral-uniform-bd-lem} and the Lebesgue dominated convergence theorem,
$w_{\varepsilon}(x,t)$ increases to $w(x,t)$ as $\varepsilon\to 0$ for any $x\in\Omega$, $0<t<T$.
Hence by \eqref{w-defn}, \eqref{elliptic-green-fcn-estimate} and a direct computation, we get
\begin{equation}\label{w-l1-bd}
\|w(\cdot,t)\|_{L^1(\Omega)}\leq C\int_\Omega u(x,t)\delta(x)\,dx\quad\forall 0<t<T
\end{equation}
for some constant $C>0$.
Now
\begin{equation}\label{w-epsilon-t-derivative}
w_{\varepsilon,t}=G_{\varepsilon}(u_t)=G_{\varepsilon}(\Delta u)=-u(x,t)+\int_{\partial\Omega_{\varepsilon}}\frac{\partial G_{\varepsilon}}{\partial N_y}(x,y)u(y,t)\,d\sigma (y)
\end{equation}
Integrating \eqref{w-epsilon-t-derivative} over $(t_1,t_2)$, $0<t_1<t_2<T_1<T$, we have

\begin{eqnarray*}
& & w_\varepsilon(x,t_2) + \int_{t_2}^{T_1}\int_{\partial\Omega_\varepsilon}\frac{\partial G_{\varepsilon}}{\partial N_y}(x,y)u(y,\tau)\,d\sigma(y)\,d\tau\\
&\leq & w_\varepsilon(x,t_1) + \int_{t_1}^{T_1}\int_{\partial\Omega_\varepsilon}\frac{\partial G_{\varepsilon}}{\partial N_y}(x,y)u(y,\tau)\,d\sigma(y)\,d\tau\ .
\end{eqnarray*}
Letting $\varepsilon\to 0$, we have

\begin{align*}
&w(x,t_2)+ \iint_{\partial\Omega\times(t_2, T_1)}\frac{\partial G}{\partial N_y}(x,y)\,d\nu(y,\tau)\notag\\
 \leq & w(x,t_1)+ \iint_{\partial\Omega\times(t_1, T_1)}\frac{\partial G}{\partial N_y}(x,y)\,d\nu(y,\tau)\quad\forall 0<t_1<t_2<T_1<T.
\end{align*}
Let $0<T_1<T$. This inequality shows that for each $x\in \Omega$,  the function

$$
H(x,t):=w(x,t)+ \iint_{\partial\Omega\times(t,T_1)}\frac{\partial G}{\partial N_y}(x,y)\,d\nu(y,\tau)
$$
is decreasing in $t$.  Hence for each $x\in \Omega$,
$H^{\ast}(x):=\lim_{t\to 0}H(x,t)$ exists.
Since
\begin{equation*}
\iint_{\partial\Omega\times(0,T_1)}\frac{\partial G}{\partial N_y}(x,y)\,d\nu(y,\tau)
\end{equation*}
exists and is finite, we conclude that

$$
w^*(x):=\lim_{t\to 0^+} w(x,t)
$$
exists. Letting $t\to 0$ in \eqref{w-l1-bd}, by Lemma \ref{u-weighted-l1-integral-uniform-bd-lem}, $w^{\ast}\in L^1(\Omega)$.

Sinice by Theorem 2.3 of \cite{Wi} there exists a constant $C>0$ such that
\begin{equation*}
|\nabla_yG(x,y)|\le C|x-y|^{1-n}\quad\forall x,y\in\Omega,
\end{equation*}
we have
\begin{equation*}
\int_{\Omega}\left(\iint_{\partial\Omega\times(0,t)}\frac{\partial G}{\partial N_y}(x,y)\,d\nu(y,\tau)\right)\,dx\le C\nu (\partial\Omega\times (0,t))<\infty\quad\forall 0<t<T.
\end{equation*}
Hence we have
\begin{align*}
&\int_{\Omega}|w^{\ast}(x)-w(x,t)|\,dx\notag\\
\le&\int_{\Omega}\left|H^{\ast}(x)-H(x,t)\right|\,dx
+\int_{\Omega}\left(\iint_{\partial\Omega\times(0,t)}\frac{\partial G}{\partial N_y}(x,y)\,d\nu(y,\tau)\right)\,dx\notag\\
\le&\int_{\Omega}\left|H^{\ast}(x)-H(x,t)\right|\,dx
+C\nu (\partial\Omega\times (0,t)).
\end{align*}
Letting $t\to 0$, we have
\begin{equation}\label{w-to-w-ast-l1}
\lim_{t\to 0}\int_{\Omega}|w^{\ast}(x)-w(x,t)|\,dx=0.
\end{equation}
By \eqref{w-elliptic-eqn} for each $0<t<T$, $w(\cdot,t)$ is superharmonic in $\Omega$. Hence $w^*$ is superharmonic in $\Omega$.

The rest of the proof follows from the arguments based on the proof of theorem 7 in [DK].  First of all, by \eqref{w-defn}, \eqref{w-to-w-ast-l1}, Lemma \ref{u-weighted-l1-integral-uniform-bd-lem} and the Fubini Theorem, for any $\eta\in C_0^{\infty}(\overline{\Omega})$, we have
\begin{align}\label{u-w-integral-eqn}
\int_{\Omega}u(x,t)\eta (x)\,dx=&-\int_{\Omega}\left(\int_{\Omega}G(x,y)\Delta\eta (y)\,dy\right)u(x,t)\,dx\notag\\
=&-\int_{\Omega}\left(\int_{\Omega}G(x,y)u(x,t)\,dx\right)\Delta\eta(y)\,dy\notag\\
=&-\int_{\Omega}w(x,t)\Delta\eta(x)\,dx\notag\\
\to&-\int_{\Omega}w^{\ast}\Delta\eta\,dx\quad\mbox{ as }t\to 0.
\end{align}
On the other hand by the Riesz representation theorem for superharmonic functions \cite{He}, there exists $\mu\in M(\Omega,\delta)$ and a nonnegative harmonic function $h$ in $\Omega$ such that
\begin{equation}\label{w-ast-representation}
w^*(x)=\int_\Omega G(x,y)\,d\mu +h(x)\quad\forall x\in \Omega,
\end{equation}
where $G(x,y)$ is the Green function for the Laplacian $-\Delta$ in $\Omega$.
Applying Martin representation theorem \cite{He} to $h$, there exists $\lambda\in M(\partial\Omega)$ such that
\begin{equation}\label{h-representation}
h(x)=\int_{\partial\Omega}\frac{\partial G}{\partial N_y}(x,y)\,d\lambda(y)\quad\forall x\in \Omega.
\end{equation}
By \eqref{u-w-integral-eqn},\eqref{w-ast-representation}, \eqref{h-representation} and an argument similar to the proof of Theorem 7 of \cite{DK} we get \eqref{u-initial-value-integral-sense} and the lemma follows.

\end{proof}

\section{Existence of the lateral trace and the representation formula}
\setcounter{equation}{0}
\setcounter{theorem}{0}

In this section we will prove Theorem \ref{representation-thm}.  The existence of lateral trace and initial trace have been established in Lemma \ref{existence-bdary-trace-lem} and Lemma \ref{initial-trace-existence-lem} respectively.  It remains to prove \eqref{representation-formula}.

\begin{lem}\label{green-function-derivative-lateral-integral-limit-lem}
For any $x\in\Omega$, $0<s<t<T$, we have
\begin{equation*}
\lim_{\varepsilon\to 0}\int_s^t\int_{\partial\Omega_{\varepsilon}}\dfrac{\partial G_{\varepsilon}}{\partial N_y}(x,t; y,\tau)u(y,\tau)\,d\sigma(y)\,d\tau
=\iint_{\partial\Omega\times (s,t)}\dfrac{\partial G}{\partial N_y}(x,t;y,\tau)\,d\nu(y,\tau).
\end{equation*}
\end{lem}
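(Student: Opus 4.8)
The plan is to pass to the limit $\varepsilon\to 0$ in the lateral integral by combining three ingredients: the uniform $L^1$ bound on $u$ over $\partial\Omega_\varepsilon$ from Lemma \ref{lateral-u-integral-bd-lem}, the weak convergence $\nu_\varepsilon\rightharpoonup\nu$ already established in Lemma \ref{existence-bdary-trace-lem}, and the locally uniform convergence of the Green kernel derivatives $\partial G_\varepsilon/\partial N_y\to\partial G/\partial N_y$ near $\partial\Omega\times(s,t)$. The subtlety is that $\partial G_\varepsilon/\partial N_y(x,t;\cdot,\cdot)$ is a function living on the moving boundary $\partial\Omega_\varepsilon$, whereas $\nu_\varepsilon$ was identified (via \eqref{nu-epsilon-defn}) with a measure on the fixed boundary $\partial\Omega$ through the normal-coordinate diffeomorphism $y\mapsto(z(y),\delta(y))$. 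So the first step is to rewrite the integral over $\partial\Omega_\varepsilon$ as an integral over $\partial\Omega$ against $\nu_\varepsilon$, by setting $\psi_\varepsilon(z,\tau):=\frac{\partial G_\varepsilon}{\partial N_y}(x,t;z+\varepsilon N_z,\tau)$ (using that points of $\partial\Omega_\varepsilon$ are exactly $z+\varepsilon N_z$ for $z\in\partial\Omega$, for $\varepsilon\le\varepsilon_0$) and using the definition of $\nu_\varepsilon$, together with the smooth, bounded Jacobian of the normal-coordinate map, which is absorbed into the test function. Fixing $0<s'<s<t<t'<T$ and a cutoff $\chi\in C_c((s',t'))$ with $\chi\equiv 1$ on $[s,t]$, one reduces to showing $\iint \chi\,\psi_\varepsilon\,d\nu_\varepsilon\to\iint\chi\,\psi\,d\nu$ where $\psi(z,\tau):=\frac{\partial G}{\partial N_y}(x,t;z,\tau)$; the contributions from $\tau\in(s',s)\cup(t,t')$ are then shown to be negligible by taking $s'\uparrow s$, $t'\downarrow t$, using the uniform bound \eqref{nu-epsilon-measure-bd} and continuity of $\nu$ in the time variable (the monotone limit argument already used in Lemma \ref{existence-bdary-trace-lem}).

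The heart of the matter is the convergence $\chi\psi_\varepsilon\to\chi\psi$ uniformly on $\partial\Omega\times[s',t']$, i.e. $\frac{\partial G_\varepsilon}{\partial N_y}(x,t;z+\varepsilon N_z,\tau)\to\frac{\partial G}{\partial N_y}(x,t;z,\tau)$ uniformly for $(z,\tau)\in\partial\Omega\times[s',t']$, with $x$ and $t$ fixed and $t>t'$. This I would obtain from standard parabolic boundary regularity for the Green kernels: since $G_\varepsilon(x,t;\cdot,\cdot)$ solves the backward heat equation in $\Omega_\varepsilon\times(-\infty,t)$ and vanishes on $\partial\Omega_\varepsilon\times(-\infty,t)$, and since $G_\varepsilon(x,t;\cdot,\cdot)\uparrow G(x,t;\cdot,\cdot)$ on compact subsets of $\Omega\times(-\infty,t)$ (monotone convergence as $\varepsilon\to 0$), Dini's theorem upgrades this to locally uniform convergence in the interior; then interior parabolic (Schauder) estimates applied in a fixed tubular neighborhood, together with boundary Schauder estimates up to the smoothly varying boundaries $\partial\Omega_\varepsilon$ (which converge in $C^\infty$ to $\partial\Omega$ as $\varepsilon\to 0$, by the choice of $\varepsilon_0$), give uniform $C^1$ bounds and hence convergence of the normal derivatives up to the boundary. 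The shift $z\mapsto z+\varepsilon N_z$ is harmless since the derivatives are equicontinuous. This step — making the up-to-the-boundary convergence of $\partial G_\varepsilon/\partial N_y$ precise — is the main obstacle, and it is the reason one keeps $x$ and $t$ fixed with $\tau$ bounded away from $t$, so that all relevant quantities are evaluated in a compact parabolic region where the kernels are smooth and positive.

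Once uniform convergence $\chi\psi_\varepsilon\to\chi\psi$ is in hand, the limit is routine: write
\[
\iint \chi\psi_\varepsilon\,d\nu_\varepsilon-\iint\chi\psi\,d\nu=\iint \chi(\psi_\varepsilon-\psi)\,d\nu_\varepsilon+\left(\iint\chi\psi\,d\nu_\varepsilon-\iint\chi\psi\,d\nu\right),
\]
bound the first term by $\|\chi(\psi_\varepsilon-\psi)\|_{L^\infty}\,\nu_\varepsilon(\partial\Omega\times(0,t'))$ which tends to $0$ by uniform convergence and \eqref{nu-epsilon-measure-bd}, and send the second to $0$ by the weak convergence $\nu_\varepsilon\rightharpoonup\nu$ against the fixed test function $\chi\psi\in C_c(\partial\Omega\times(0,T))$. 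Finally, undoing the cutoff by letting $s'\uparrow s$ and $t'\downarrow t$ (with a dominated-convergence argument on the $\nu$ side and the uniform bound on the $\nu_\varepsilon$ side) yields exactly
\[
\lim_{\varepsilon\to 0}\int_s^t\int_{\partial\Omega_\varepsilon}\frac{\partial G_\varepsilon}{\partial N_y}(x,t;y,\tau)u(y,\tau)\,d\sigma(y)\,d\tau=\iint_{\partial\Omega\times(s,t)}\frac{\partial G}{\partial N_y}(x,t;y,\tau)\,d\nu(y,\tau),
\]
as claimed.
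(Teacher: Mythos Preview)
Your proposal captures the right overall strategy---split the limit into ``uniform convergence of the kernel derivatives'' plus ``weak convergence $\nu_\varepsilon\rightharpoonup\nu$''---and this is exactly what the paper does. However, there is a genuine gap in how you handle the time variable near $\tau=t$.

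The kernel $\frac{\partial G}{\partial N_y}(x,t;z,\tau)$ (and likewise $\frac{\partial G_\varepsilon}{\partial N_y}$) is singular as $\tau\uparrow t$ and is undefined for $\tau\ge t$, so it is not a continuous function on $\partial\Omega\times(0,T)$ and cannot be tested directly against the weak convergence $\nu_\varepsilon\rightharpoonup\nu$. Your cutoff $\chi\in C_c((s',t'))$ with $\chi\equiv1$ on $[s,t]$ and $t<t'$ makes $\chi\psi_\varepsilon$ and $\chi\psi$ undefined (or discontinuous, if set to zero) on $[t,t')$; later you also write ``$t>t'$,'' so the geometry of the cutoff is internally inconsistent. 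In any event, the region $\tau$ close to $t$ \emph{from below} is exactly where the singularity lives and where your uniform-convergence claim fails. Removing the overshoot by letting $t'\downarrow t$ does not help: the problem is the blow-up just below $t$, not above it, and the ``continuity of $\nu$ in the time variable'' says nothing about the size of the integrand there.

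The paper's proof resolves this by cutting \emph{inside} the interval: fix $\delta_1>0$, treat $[s,t-\delta_1]$ by your two-term splitting (this is their $I_1+I_2$), and control the tail $[t-\delta_1,t]$ separately via the uniform Gaussian-type bound
\[
\frac{\partial G_\varepsilon}{\partial N_y}(x,t;y,\tau)\ \le\ \frac{C_1}{(t-\tau)^{(n+1)/2}}\,\exp\!\left(-\frac{C_2\,\delta(x)^2}{t-\tau}\right),
\]
valid uniformly in $\varepsilon\in(0,\varepsilon_1)$ and $y\in\partial\Omega_\varepsilon$ (their \eqref{green-epsilon-derivative-uniform-bd10}--\eqref{green-derivative-uniform-bd10}). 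Combined with the uniform mass bound $\sup_\varepsilon\nu_\varepsilon(\partial\Omega\times(0,t))<\infty$, this makes the tail contributions $I_3+I_4$ arbitrarily small for $\delta_1$ small, after which one sends $\delta_1\to 0$. Your argument is missing precisely this tail estimate; without it the passage to the limit over the full interval $(s,t)$ is not justified.
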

\begin{proof}
Let $x\in\Omega$ and $0<\delta_1<(t-s)/3$. We choose $0<\varepsilon_1<\min (\varepsilon_0,\delta (x)/2)$ and let $0<\varepsilon<\varepsilon_1$. Then we have
\begin{align}\label{bdary-integral-estimates}
& \left|\int_s^t\int_{\partial\Omega_{\varepsilon}}\dfrac{\partial G_\varepsilon}{\partial N_y}(x,t; y,\tau)u(y,\tau)\,d\sigma(y)\,d\tau
-\iint_{\partial\Omega\times (s,t)}\dfrac{\partial G}{\partial N_y}(x,t; y,\tau)d\nu(y,\tau)\right|\notag\\
 \leq &\int_s^{t-\delta_1}\int_{\partial\Omega_{\varepsilon}}\left|\dfrac{\partial G_{\varepsilon}}{\partial N_y}(x,t;y,\tau)-\frac{\partial G}{\partial N_y}(x,t; y,\tau)\right|u(y,\tau)\,d\sigma(y)\,d\tau\notag\\
+&\left|\int_s^{t-\delta_1}\int_{\partial\Omega_{\varepsilon}}\frac{\partial G}{\partial N_y}(x,t; y,\tau)u(y,\tau)\,d\sigma(y)d\tau-\iint_{\partial\Omega\times(s,t-\delta_1)}\frac{\partial G}{\partial N_y}(x,t; y,\tau)\,d\nu(y,\tau)\right|\notag\\
+&\int_{t-\delta_1}^t\int_{\partial\Omega_{\varepsilon}}\dfrac{\partial G_{\varepsilon}}{\partial N_y}(x,t;y,\tau)u(y,\tau)\,d\sigma(y)d\tau
+\iint_{\partial\Omega\times (t-\delta_1,t)}\dfrac{\partial G}{\partial N_y}(x,t; y,\tau)\,d\nu(y,\tau)\notag\\
\equiv &I_1+I_2+I_3+I_4.
\end{align}
Since by Lemma \ref{lateral-u-integral-bd-lem},
\begin{equation}\label{nu-epsilon-uniform-bd}
\sup_{\varepsilon\in (0,\varepsilon_0)}\nu_\varepsilon(\partial\Omega\times (0,t))<\infty,
\end{equation}
we have
\begin{align}\label{I1-limit}
 I_1\le& \nu_{\varepsilon}(\partial\Omega\times (s,t-\delta_1))\ \underset{\substack{s\le\tau\le t-\delta_1\\y\in\partial\Omega_{\varepsilon}}}{\max}|\nabla_y(G-G_{\varepsilon})(x,t;y,\tau)|\notag\\
 \le & C\ \underset{\substack{s\le\tau\le t-\delta_1\\y\in\partial\Omega_{\varepsilon}}}{\max}|\nabla_y(G-G_{\varepsilon})(x,t;y,\tau)|\notag\\
 \to&0\qquad\mbox{ as }\varepsilon\to 0.
\end{align}
Next,
\begin{align}\label{I2-limit}
I_2
\le&\left|\int_s^{t-\delta_1}\int_{\partial\Omega_{\varepsilon}}\dfrac{\partial G}{\partial N_y}(x,t; y,\tau)u(y,\tau)\,d\sigma(y)\,d\tau\right.\notag\\
-&\left.\int_s^{t-\delta_1}\int_{\partial\Omega_{\varepsilon}}\dfrac{\partial G}{\partial N_{z(y)}}(x,t; z(y),\tau)u(y,\tau)\,d\sigma(y)\,d\tau\right|\notag\\
+&\left|\iint_{\partial\Omega\times(s,t-\delta_1)}\dfrac{\partial G}{\partial N_z}(x,t; z,\tau)\,d\nu_{\varepsilon}(z,\tau)
-\iint_{\partial\Omega\times(s,t-\delta_1)}\dfrac{\partial G}{\partial N_z}(x,t;z,\tau)\,d\nu(z,\tau)\right|\notag\\
\leq &\nu_{\varepsilon}(\partial\Omega\times (s,t-\delta_1))\underset{\substack{s\le\tau\le t-\delta_1\\y\in\partial\Omega_{\varepsilon}}}{\max}\left|\left(\dfrac{\partial G}{\partial N_y}(x,t;y,\tau)-\dfrac{\partial G}{\partial N_{z(y)}}(x,t;z(y),\tau)\right)\right|\notag\\
+&\left|\iint_{\partial\Omega\times(s,t-\delta_1)}\dfrac{\partial G}{\partial N_z}(x,t; z,\tau)\,d\nu_{\varepsilon}(z,\tau)
-\iint_{\partial\Omega\times(s,t-\delta_1)}\dfrac{\partial G}{\partial N_z}(x,t; z,\tau)\,d\nu(z,\tau)\right|\notag\\
\leq& C\underset{\substack{s\le\tau\le t-\delta_1\\y\in\partial\Omega_{\varepsilon}}}{\max}\left|\left(\dfrac{\partial G}{\partial N_y}(x,t; y,\tau)-\dfrac{\partial G}{\partial N_{z(y)}}(x,t; z(y),\tau)\right)\right|\notag\\
+&\left|\iint_{\partial\Omega\times(s,t-\delta_1)}\dfrac{\partial G}{\partial N_z}(x,t;z,\tau)\,d\nu_{\varepsilon}(z,\tau)
-\iint_{\partial\Omega\times(s,t-\delta_1)}\dfrac{\partial G}{\partial N_z}(x,t;z,\tau)\,d\nu(z,\tau)\right|.\notag\\
\to&0\qquad\mbox{ as }\varepsilon\to 0.
\end{align}
Finally, since for  any $y\in\partial\Omega_{\varepsilon}$ we have $|x-y|\ge\delta(x)/2$ which together with the result of [H] implies that

\begin{equation}\label{green-epsilon-derivative-uniform-bd10}
\dfrac{\partial G_\varepsilon}{\partial N_y}(x,t; y,\tau)\leq \dfrac{C_1}{(t-\tau)^{\frac{n+1}{2}}}e^{-\frac{C_2\delta^2(x)}{t-\tau}}\ ,\quad \forall y\in \partial \Omega_\varepsilon, 0<\tau<t\ , 0<\varepsilon<\varepsilon_1
\end{equation}
and
\begin{equation}\label{green-derivative-uniform-bd10}
\dfrac{\partial G}{\partial N_y}(x,t; y,\tau)\leq \dfrac{C_1}{(t-\tau)^{\frac{n+1}{2}}}e^{-\frac{C_2\delta^2(x)}{t-\tau}}\ ,\quad \forall y\in \partial \Omega, 0<\tau<t\ ,
\end{equation}
for some positive constants $C_1$ and $C_2$.  Therefore by \eqref{nu-epsilon-uniform-bd}, \eqref{green-epsilon-derivative-uniform-bd10} and  \eqref{green-derivative-uniform-bd10}, given any small $\varepsilon'>0$, we can choose  $\delta_1$ sufficiently small such that
\begin{align}\label{I34-upper-bd}
 I_3+I_4
\leq &\iint_{\partial\Omega_{\varepsilon}\times(t-\delta_1,t)}
\dfrac{C_1e^{-\frac{C_2\delta^2(x)}{t-\tau}}}{(t-\tau)^{\frac{n+1}{2}}}\,d\nu_\varepsilon(y,\tau)
+\iint_{\partial\Omega\times (t-\delta_1,t)}\dfrac{C_1e^{-\frac{C_2\delta^2(x)}{t-\tau}}}{(t-\tau)^{\frac{n+1}{2}}}\,d\nu(y,\tau)\notag\\
\leq & Ca_0\varepsilon',
\end{align}
for some constant $C>0$ where
\begin{equation*}
a_0=\nu(\partial\Omega\times (0,t))+\sup_{\varepsilon\in (0,\varepsilon_0)}\nu_\varepsilon(\partial\Omega\times (0,t))<\infty.
\end{equation*}
By \eqref{bdary-integral-estimates}, \eqref{I1-limit}, \eqref{I2-limit} and  \eqref{I34-upper-bd}, we have
\begin{align}\label{green-derivative-lateral-integral-limit-bd}
&\limsup_{\varepsilon\to 0}\left|\int_s^t\int_{\partial\Omega_{\varepsilon}}\dfrac{\partial G_{\varepsilon}}{\partial N_y}(x,t;y,\tau)u(y,\tau)\,d\sigma(y)\,d\tau
-\iint_{\partial\Omega\times (s,t)}\dfrac{\partial G}{\partial N_y}(x,t; y,\tau)d\nu(y,\tau)\right|\notag\\
&\leq Ca_0\varepsilon'.
\end{align}
Letting $\varepsilon'\to 0$ in \eqref{green-derivative-lateral-integral-limit-bd} and the lemma follows.
\end{proof}

We are now ready to prove Theorem \ref{representation-thm}.

\begin{proof}[\bf{Proof of Theorem \ref{representation-thm}}:]

\noindent First of all, by letting $\varepsilon\to 0$ in \eqref{representation1}, with the help of Lemma \ref{green-function-derivative-lateral-integral-limit-lem}, we have
for any  $x\in\Omega$ and $0<\tau<t<T$,

\begin{align}\label{u-representaton5}
u(x,t)=&\int_{\Omega}G(x,t;y,\tau)u(y,\tau)\,dy +\iint_{\partial\Omega\times (\tau,t)}\dfrac{\partial G}{\partial N_y}(x,t;y,s)\,d\nu(y,s)\notag\\
=&\int_{\Omega}G(x,t;y,0)u(y,\tau)\,dy +\int_\Omega \left(G(x,t;w,\tau)-G(x,t;w,0)\right)u(w,\tau)\,dw\notag \\
+&\iint_{\partial\Omega\times (\tau,t)}\dfrac{\partial G}{\partial N_y}(x,t;y,s)\,d\nu(y,s)\notag\\
=& J_1+J_2 + J_3.
\end{align}
By Lemma \ref{initial-trace-existence-lem}, we have
\begin{equation}\label{J1-limit}
\lim_{\tau\to 0}J_1=\int_\Omega G(x,t;y,0)\,d\mu(y) +\int_{\partial\Omega}\dfrac{\partial G}{\partial N_y}(x,t;y,0)\,d\lambda(y)\ .
\end{equation}
By the monotone convergence theorem, we have
\begin{equation}\label{J3-limit}
\lim_{\tau\to 0}J_3=\iint_{\partial\Omega\times (0,t)}\dfrac{\partial G}{\partial N_y}(x,t;y,s)\,d\nu(y,s)\ .
\end{equation}
Finally for any $x\in\Omega$, $0<\tau<\tfrac{t}{2}<t<T$, we have
\begin{align}\label{J2-limit}
J_2=&\left|\int_{\Omega} u(w,\tau)\int_0^{\tau}G_s(x,t;w,s)\,ds\,dw\right|\notag \\
\leq &\left|\int_{\overline{\Omega}_{\varepsilon_0}}u(w,\tau)\int_0^{\tau}G_s(x,t;w,s)\,ds\,dw
\right|\notag\\
& +\left|\int_{\Omega\setminus \overline{\Omega}_{\varepsilon_0}}u(w,\tau)\int_0^1\int_0^{\tau}\dfrac{d G_s}{da}(x,t;z(w)+a\delta(w)N_{z(w)},s)\,ds\, da\,dw\right|\notag \\
\leq &C\tau\int_{\overline{\Omega}_{\varepsilon_0}}u(w,\tau)\delta (w)\,dw\notag\\
+&\left|\int_{\Omega}u(w,\tau)\delta (w)\int_0^1\int_0^{\tau}\sum_{j=1}^nN_j(z(w))\dfrac{\partial G_s}{\partial y_j}(x,t;z(w)+a\delta(w)N_{z(w)},s)\,ds\, da\,dw\right|\notag \\
\leq & C\tau\int_{\Omega}u(w,\tau)\delta (w)\,dw\notag \\
\to & 0\quad \text{ as } \tau\to 0
\end{align}
where $N_z=(N_1(z),\cdots,N_n(z))$ is the unit inner normal on $\partial\Omega$  for any $z\in\partial\Omega$.
By \eqref{u-representaton5}, \eqref{J1-limit}, \eqref{J3-limit} and \eqref{J2-limit},
\eqref{representation-formula} follows.

To prove the converse part of the theorem, given $(\mu, \lambda, \nu)\in M(\Omega, \delta)\times M(\partial\Omega)\times M_s(\partial\Omega\times(0,T))$, let $u$ be given by \eqref{representation-formula} and
\begin{equation}\label{representation-formula12}
\left\{\begin{aligned}
v_1(x,t)=&\int_\Omega G(x,t;y,0)\,d\mu(y)+\int_{\partial\Omega}\dfrac{\partial G}{\partial N_y}(x,t;y,0)\,d\lambda(y)\quad\forall x\in\overline{\Omega}, t>0, \notag\\
v_2(x,t)=&\iint_{\partial\Omega\times(0,t)}\dfrac{\partial G}{\partial N_y}(x,t;y,s)\,d\nu(y,s)\quad\forall x\in\Omega, 0<t<T
\end{aligned}\right.
\end{equation}
where $\partial/\partial N_y$ is the derivative with respect to the unit inner normal $N_y$ at $y\in\partial\Omega$.
Then
\begin{equation*}
u(x,t)=v_1(x,t)+v_2(x,t)\quad\forall (x,t)\in Q_T.
\end{equation*}
By the results of \cite{H} and standard parabolic theory (\cite{F}, \cite{LSU}),
$v_1\in C^{2,1}(\overline{\Omega}\times (0,\infty))$ satisfies
\begin{equation*}
\left\{\begin{aligned}
v_{1,t}=&\Delta v_1\quad\mbox{ in }\Omega\times (0,\infty)\\
v_1=&0\qquad\mbox{ on }\partial\Omega\times (0,\infty)
\end{aligned}\right.
\end{equation*}
and
\begin{equation*}
\lim_{t\to 0}\int_{\Omega}v_1\eta\,dx=\int_{\Omega}\eta\, d\mu+\int_{\partial\Omega}\dfrac{\partial\eta}{\partial N}\,d\lambda\quad\forall \eta\in C_0^{\infty}(\overline{\Omega}).
\end{equation*}
Hence $v_1$ has initial traces $(\mu,\lambda)$ and zero lateral trace. Thus it suffices
to prove that $v_2$ satisfies the heat equation in $Q_T$, has initial traces $(0,0)$
and lateral trace $\nu$.

We choose a sequence  $\nu_j$ where $d\nu_j=\varphi_j\,d\sigma\, dt, \varphi_j\in C(\partial\Omega\times[0,T])$, such that
$$
\lim_{j\to\infty}\int_{\partial\Omega\times(0,T_1)}h\,d\nu_j =\int_{\partial\Omega\times(0,T_1)} h\,d\nu\quad \forall h\in C(\partial\Omega\times[0,T_1]), 0<T_1<T.
$$
Let
\begin{equation}
v_{2,j}(x,t)=\iint_{\partial\Omega\times(0,t)}\dfrac{\partial G}{\partial N_y}(x,t;y,s)\varphi_j\,d\sigma ds\quad\forall x\in\Omega, 0<t<T,j\in\Z^+.
\end{equation}
By standard parabolic theory each $v_{2,j}\in C^{2,1}(\overline{\Omega}\times (0,\infty))$ is a classical solution of the heat equation in $Q_T$ with initial traces $(0,0)$ and lateral value $\varphi_j$. Hence  $v_{2,j}$ has $(0,0, \nu_j)$ as its trace triple for any $j\in\Z^+$.  Moreover, for each $(x,t)\in Q_T$, $v_{2,j}(x,t)$ converges to $v_2(x,t)$ as $j\to\infty$ and for each $t\in (0,T)$, $v_{2,j}(\cdot, t)$ converges to $v_2(\cdot, t)$ in $L^1(\Omega)$ as $j\to\infty$. Also  for each $T_1\in (0,T)$ the sequence $\{\|v_{2,j}\|_{L^1(Q_{T_1})}\}_{j=1}^{\infty}$ are uniformly bounded .  Without loss of generality we may assume that $v_{2,j}$ converges weakly to $v_2$ in each $Q_{T_1}$  as $j\to\infty$ for any $0<T_1<T$.  Since $v_{2,j}$ satisfies the heat equation
in $Q_T$, we have
\begin{align*}
&\int_0^T\int_\Omega v_{2,j}(\varphi_t+\Delta \varphi)\,dx\,dt=0\ ,\quad \forall \varphi\in C^\infty_c(Q_T), j\in\Z^+\notag\\
\Rightarrow\quad &\int_0^T\int_\Omega v_2(\varphi_t+\Delta \varphi)\,dx\,dt=0\ ,\quad \forall \varphi\in C^\infty_c(Q_T)\quad\mbox{ as }j\to\infty.
\end{align*}
Hence $v_2$ is a weak solution of the heat equation. Thus by standard by parabolic theory \cite{LSU} $u$ is  a classical solution of the heat equation in $Q_T$.

Next, let $\nu'$ be the lateral trace of $v_2$. Then by Lemma \ref{existence-bdary-trace-lem} for any $h\in C_c(\partial\Omega\times(0,T))$, we have
\begin{equation}\label{nu'-identity}
\iint_{\partial\Omega\times(0,T)}h\,d\nu'
=\int_0^T\int_{\Omega}v_2\left[\overline{h}\Delta \overline{\delta}+2\nabla\overline{h}\cdot \nabla \overline{\delta} +\overline{\delta}\Delta\overline{h}+\overline{\delta}\overline{h}_t\right]\,dx\,dt
\end{equation}
where $\overline{h}$ is a continuous extension of $h$ to $\overline{\Omega}\times (0,T)$ vanishing near $t=0, T$, such that  $\overline{h}$ is constant along each inner normal direction of $\partial\Omega$ in $(\overline{\Omega}\setminus\Omega_{\varepsilon_0})\times (0,T)$.

On the other hand, since the lateral trace of $v_{2,j}$ is $\nu_j$, by putting $u=v_{2,j}$ and $\nu=\nu_j$ in \eqref{nu-identity} and letting $j\to \infty$, we get for any $h\in C_c(\partial\Omega\times(0,T))$, $\nu$ satisfies \eqref{nu-identity}  with $u=v_2$ snd
$\overline{h}$ being a continuous extension of $h$ to $\overline{\Omega}\times (0,T)$ vanishing near $t=0, T$, such that  $\overline{h}$ is constant along each inner normal direction of $\partial\Omega$ in $(\overline{\Omega}\setminus\Omega_{\varepsilon_0})\times (0,T)$. Hence by \eqref{nu-identity} and \eqref{nu'-identity}, $\nu'\equiv\nu$.

Multiplying the heat equation satisfied by $v_{2,j}$ with some $\eta\in C^\infty_0(\overline{\Omega})$, by integration by parts, we have
\begin{align*}
&\int_\Omega v_{2,j}(x,t)\eta(x)\,dx =\int_0^t\int_\Omega v_{2,j}\Delta \eta\,dx\,ds +\int_0^t\int_{\partial\Omega}\varphi_j\dfrac{\partial\eta}{\partial N}\,d\sigma\, dt\notag\\
\Rightarrow\quad&\int_\Omega v_2(x,t)\eta(x)\,dx =\int_0^t\int_\Omega v_2\Delta \eta\,dx\,ds +\iint_{\partial\Omega\times (0,t)}\dfrac{\partial\eta}{\partial N}\,d\nu
\quad\mbox{ as }j\to\infty\notag\\
\Rightarrow\quad&\lim_{t\to 0}\int_\Omega v_2(x,t)\eta(x)\,dx =0.
\end{align*}
Hence the initial trace of $v_2$ is $(0,0)$ and the theorem follows.
\end{proof}


\begin{thebibliography}{99}

\bibitem[A]{A}
D.G.~Aronson, {\em Non-negative solutions of linear parabolic equations}, Annali della Scuola Normale Superiore di Pisa 22 (1968), no. 4, 607--694.

\bibitem[AC]{AC}
D.G.~Aronson and L.A.~Caffarelli, {\em The initial trace of a solution of the porous medium equation}, Trans. Amer. Math. Soc. 280 (1983), 351--366. 

\bibitem[C]{C}
I.~Chavel, {\em Eigenvalues in Riemannian Geometry}, Academic Press, U.S.A., 1984.

\bibitem[CK1]{CK1}
K.S.~Chou and Y.C.~Kwong, {\em The trace triple for nonnegative solutions of generalized porous medium equations}, Calulus of Variation and PDE's 58 (2019), Article number: 23.

\bibitem[CK2]{CK2}
K.S.~Chou and Y.C.~Kwong,  {\em Nonnegative solutions of the porous medium equation with continuous lateral boundary data}, preprint 2019.

\bibitem[Co]{Co}
D.L.~Cohn, {Measure theory}, Second edition, Birkh\" auser advanced texts Basler Lehrbucher, Springer, New York 2013.

\bibitem[DK]{DK}
B.E.J.~Dahlberg and C.E.~Kenig, {\em Non-negative solution of the initial-Dirichlet problem for generalized porous medium equations in cylinders}, J. Amer. Math. Soc. 1 (1988), no. 2, 401--412.

\bibitem[DiH1]{DiH1} E.~DiBenedetto and M.A.~Herrero, {\em On the Cauchy problem and initial traces for a degenerate parabolic equation}, Trans. Amer. Soc. 314 (1989), 187--224.

\bibitem[DiH2]{DiH2} E.~DiBenedetto and M.A.~Herrero, {\em Nonnegative solutions of the evolution $p$-Laplacian equation. Initial traces and Cauchy problem when $1<p<2$}, Arch. Rational Mech. Anal. 111(1990), 225--290.

\bibitem[EG]{EG}
L.C.~Evans and R.F.~Gariepy, {\em Measure theory and fine properties of functions}, Revised edition, CRC Press, Boca Raton, 2015.

\bibitem[FGS]{FGS}
E.B.~Fabes, N.~ Garofalo and S.~Salsa, {\em A backward Harnack inequality and Fatou theorem for nonnegative solutions of parabolic equations}, Illinois J. Math. 30 (1986), no. 4, 536-565.

\bibitem[F]{F}
A.~Friedman, {\em Partial Differential Equations of Parabolic Type}, Dover Publications, Mineola, N.Y., U.S.A., 2008.

\bibitem[GT]{GT}
D.~Gilbarg and N.S.~Trudinger,  {\em Elliptic partial differential equations of second order}, Springer-Verlag, Berlin, Heidelberg 2001.


\bibitem[He]{He}
L.L.~Helms, {\em Introduction to potential theory}, Wiley-Interscience, 1969.

\bibitem[HIT]{HIT}
K.~Hisa, K.~Ishige and J.~Takahashi,  {\em Initial traces and solvability for a semilinear heat equation on a half space of $\mathbb{R}^N$}, Trans. Amer. Math. Soc. electronically published on May 9, 2023, \url{https://doi.org/10.1090/tran/8922}. 

\bibitem[H]{H}
K.M.~Hui,  {\em A Fatou theorem for the solution of the heat equation at the corner points of a cylinder}, Trans. Amer. Math. Soc. 333 (1992), no 2, 607-642.


\bibitem[HW]{HW}
P.~Hartman and A.~Winter, {\em On the solutions of the equation of heat conduction},
Amer. J. Math. 72 (1950), 367--395.

\bibitem[Is]{Is}
K.~Ishige, {\em On the existence of solutions of the Cauchy problem for a doubly nonlinear parabolic equation}, SIAM J. Math. Anal.27 (1996), 1235--1260.

\bibitem[IsK]{IsK}
K.~Ishige and J.~Kinnunen, {\em Intial trace for a doubly nonlinear parabolic equation}, J. Evol. Equ. 11 (2011), 943--957. 

\bibitem[I]{I}
S.~Ito, {\em Diffusion Equations}, Translations of Math. Mono vol. 114, Amer. Math. Soc., Providence, 1992.

\bibitem[K]{K}
J.T.~Kemper,  {\em Temperatures in several variables: kernel functions, representations and parabolic boundary values}, Trans. Amer. Math. Soc., 167 (1972), 243-262.

\bibitem[LSU]{LSU}
O.A. Ladyzenskaya, V.A. Solonnikov and N.N. Uraltceva, {\em Linear and quasilinear equations of parabolic type, } Transl. Math. Mono. vol. 23, Amer. Math. Soc., Providence, R.I., U.S.A., 1968.

\bibitem[MV1]{MV1}
M.~Marcus and L.~V\'eron, {\em Trace aux bord des solutions positives d'\'equations elliptiques et paraboliques non lin\'eaires: r\'esultats d'existence et d'unicit\'e}, C.R. Acad. Sci. Paris 323, Ser. I (1996), 603--608.

\bibitem[MV2]{MV2}
M.~Marcus and L.~V\'eron, {\em The boundary trace of positive solutions of semilinear elliptic equations: the subcritical case}, Arch. Rat. Mech. Anal. 144 (1998), 201--231.

\bibitem[MV3]{MV3} 
M.~Marcus and L.~V\'eron, {\em The boundary trace of positive solutions of semilinear elliptic equations: the supercritical case}, J.~Math.~Pures Appl. 77 (1998), 481--524

\bibitem[MV4]{MV4}
M.~Marcus and L.~V\'eron, {\em Initial trace of positive solutions of some nonlinear parabolic equations}, Commun. Partial Differential Equations 24 (1999), no. 7\&8, 1445-1499.


\bibitem[M]{M}
M.~Murata, {\em Integral representation of nonnegative solutions for parabolic equations and elliptic Martin boundaries}, J. Functional Analysis 245 (2007), 177-212.

\bibitem[W1]{W1}
D.V.~Widder, {\em Positive temperatures on an infintie rod}, Trans. Amer. Math. Soc
55 (1944), 85-95.

\bibitem[W2]{W2}
D.V.~Widder, {\em Positive temperatures on a semi-infinite rod},
Trans. Amer. Math. Soc. 75 (1953), 510--525.

\bibitem[W3]{W3}
D.V.~Widder, {\em The Heat Equation}, Academic Press, NY 1975.

\bibitem[Wi]{Wi}
K.O.~Widman, {\em Inequalities for the Green function and boundary continuity of the gradient of solutions of elliptic differential equations}, Math. Scand. 21 (1968), 17-37.


\end{thebibliography}
\end{document}